\documentclass[11pt,a4paper]{article}

\usepackage[T1]{fontenc}
\usepackage{epsfig}
\usepackage{amsmath, amsthm}
\usepackage{amsfonts,amssymb}
\usepackage[applemac]{inputenc}

\newtheorem{conj}{Conjecture}[section]
\newtheorem{theo}[conj]{Theorem}

\newtheorem{prop}[conj]{Proposition}

\newcommand{\de}{\mathrm{d}}
\newcommand{\eps}{\varepsilon}
\newcommand{\la}{\lambda}
\newcommand{\e}{\mathrm{e}}
\newcommand{\conv}{\mathrm{conv}}
\newcommand{\diam}{\mathrm{diam}}
\newcommand{\interior}{\mathrm{int}}

\renewcommand{\H}{\mathcal{H}}

\newcommand{\R}{\mathbb{R}}
\newcommand{\N}{\mathbb{N}}

\begin{document}

\title{On the analogue of the concavity of entropy power in the Brunn-Minkowski theory}
%\subjclass[2000] {52A40, 53A15, 52B10.}

\author{Matthieu Fradelizi\footnote{Corresponding author}  and  Arnaud Marsiglietti}
\date{}
\maketitle

{\raggedright {\scriptsize  Laboratoire d'Analyse et de Math\'ematiques Appliqu\'ees, Universit\'e Paris-Est Marne-la-Vall\'ee,\\ 5 Boulevard Descartes, Champs-sur-Marne, 77454 Marne-la-Vall\'ee Cedex 2, France \\ e-mail: matthieu.fradelizi@univ-mlv.fr; \ Phone: 33 (1) 60 95 75 31 \\ e-mail: arnaud.marsiglietti@univ-mlv.fr\\~ }}
{\raggedright {\scriptsize The authors were supported in part by the Agence Nationale de la Recherche, project GeMeCoD (ANR 2011 BS01 007 01).}}

\begin{abstract}
Elaborating on the similarity between the entropy power inequality and the Brunn-Minkowski inequality, Costa and Cover conjectured in  {\it On the similarity of the entropy power inequality and the Brunn-Minkowski inequality} (IEEE Trans. Inform. Theory 30 (1984), no. 6, 837-839) the $\frac{1}{n}$-concavity of the outer parallel volume of measurable sets as an analogue of the concavity of entropy power. We investigate this conjecture and study its relationship with geometric inequalities.
\end{abstract}

 {\small Keywords: entropy power, parallel volume, parallel sets, isoperimetric inequality, Brunn-Minkowski}

\section{Introduction}

First, let us explain the origin of the conjecture of Costa and Cover. Costa and Cover \cite{Costa} noticed
the similarity between the entropy power and the Brunn-Minkowski inequalities: for every independent random vectors $X$, $Y$ in $\R^n$, with finite entropy and for every compact sets $A$ and $B$ in $\R^n$ one has
$$
N(X+Y)\ge N(X)+N(Y)\quad{\rm and}\quad |A+B|^\frac{1}{n}\ge |A|^\frac{1}{n}+|B|^\frac{1}{n},
$$
where $|\cdot|$ denote the $n$-dimensional Lebesgue measure and
$$N(X)=\frac{1}{2\pi \e}\e^{\frac{2}{n}H(X)}$$ 
denotes the entropy power of $X$. Recall that for $X$ with density $f$ the entropy of $X$ is $H(X)=-\int f\ln f$ if the integral exists and $H(X)=-\infty$ otherwise.
Applying the Brunn-Minkowski inequality to $B=\eps B_2^n$ and letting $\eps$ tend to $0$ one gets the classical isoperimetric inequality 
$$
\frac{|\partial A|}{|A|^\frac{n-1}{n}}\ge n|B_2^n|^\frac{1}{n}=\frac{|\partial B_2^n|}{|B_2^n|^\frac{n-1}{n}},
$$
where the outer Minkowski surface area is defined by 
$$
|\partial A|=\lim_{\eps \to 0}\frac{|A+\eps B_2^n|-|A|}{\eps},
$$
whenever the limit exists. 
In the same way, Costa and Cover applied the entropy power inequality to $Y=\sqrt{\eps}G$, where $G$ is a standard Gaussian random vector (the $\sqrt{\eps}$ comes from the homogeneity of entropy power $N(\sqrt{\eps}X)=\eps N(X)$). Then by letting  
$\eps$ tending to $0$ and using  de Bruijn's identity 
$$
\frac{\de}{\de t}H(X+\sqrt{t}G)=\frac{1}{2}I(X+\sqrt{t}G),
$$
which states that the Fisher information (denoted by $I$) is the derivative of the entropy along the heat semi-group,
they obtained the following "isoperimetric inequality for entropy"
$$
N(X)I(X)\ge n.
$$
Notice that this inequality is equivalent to the Log-Sobolev inequality for the Gaussian measure, see \cite{Toulouse} chapter~9.

This analogy between the results of the Information theory and the Brunn-Minkowski theory  was later extended and further explained and unified through Young's inequality by Dembo \cite{Dembo} and later on by Dembo, Cover and Thomas \cite{Cover}. Each of these theories deal with a fundamental inequality, the Brunn-Minkowski inequality for the Brunn-Minkowski theory and the entropy power inequality for the Information theory. 
The objects of each theories are fellows: to the compact sets in the Brunn-Minkowski theory correspond the random vectors in the Information theory, the Gaussian random vectors play the same role as the Euclidean balls, the entropy power $N$ corresponds to the $1/n$ power of the volume $|\cdot|^{1/n}$ and, taking logarithm, the entropy $H$ is the analogue of the logarithm of the volume $\log|\cdot|$. Hence one can conjecture that properties of one theory fit into the other theory.

Thus, Costa and Cover \cite{Costa}, as an analogue of the concavity of entropy power with added Gaussian noise, which states that
$$ t \mapsto N(X + \sqrt{t} G) $$
is a concave function (see \cite{C} and \cite{V}), formulated the following conjecture.

\begin{conj}[Costa-Cover \cite{Costa}]\label{conj}
Let $A$ be a bounded measurable set in $\mathbb{R}^n$ then the function $ t \mapsto |A + t B_2^n|^{\frac{1}{n}} $ is concave on $\mathbb{R}_+$.
\end{conj}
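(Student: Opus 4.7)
The most natural starting point is the convex case. When $A$ is convex, the convexity yields the set identity
\begin{equation*}
(1-\lambda)(A + t_1 B_2^n) + \lambda(A + t_2 B_2^n) = A + ((1-\lambda)t_1 + \lambda t_2) B_2^n
\end{equation*}
for any $\lambda\in[0,1]$ and $t_1,t_2\ge 0$, and Brunn-Minkowski applied to the left-hand side immediately delivers the $\frac{1}{n}$-concavity of $t\mapsto|A+tB_2^n|^\frac{1}{n}$. So the plan is to try to extend this reasoning to arbitrary bounded measurable $A$.

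The first difficulty is that for non-convex $A$ the identity above fails: only the inclusion $\supseteq$ holds, which goes the wrong way for a direct application of Brunn-Minkowski. I would therefore first treat the one-dimensional case, where a direct argument is available. In dimension~$1$, the set $A+tB_2^1$ is, up to a null set, a disjoint union of at most countably many open intervals, and its number of components $N(t)$ is non-increasing in $t$ because components can only merge as $t$ grows. Since $\frac{\de}{\de t}|A+tB_2^1|=2N(t)$ wherever this derivative is defined, $t\mapsto|A+tB_2^1|$ is itself concave, which gives Conjecture~\ref{conj} in dimension one. This success suggests a Fubini/slicing induction on the dimension, but the slices $(A+tB_2^n)\cap H$ are not of the form $A'+tB_2^{n-1}$, which blocks any naive reduction.

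A more analytic route is to study $V(t)=|A+tB_2^n|$ directly. Its $\frac{1}{n}$-concavity is equivalent, where the derivatives exist, to the differential inequality $V(t)V''(t)\le\frac{n-1}{n}V'(t)^2$, which using $V'(t)=|\partial(A+tB_2^n)|$ becomes a quantitative isoperimetric-type inequality on the parallel body. For convex bodies, or more generally for sets of positive reach, such inequalities follow from the Aleksandrov-Fenchel theory, so one would like to exploit the fact that the parallel body $A+tB_2^n$ inherits a form of regularity from the parallel operation once $t$ is large enough, and control the contribution from small~$t$ separately.

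The main obstacle will be precisely the passage from convex (or regular) bodies to arbitrary bounded measurable sets. The topology of $A+tB_2^n$ can be intricate for small $t$, and the merging-of-components mechanism that drives the one-dimensional proof has no clean higher-dimensional analogue, since merging two pieces can affect the surface area in subtle ways whose sign is far from obvious. In view of this, I would not be surprised if the strategy above succeeds only under additional hypotheses on $A$ (such as convexity, connectedness, or positive reach) and if counterexamples can be found in the general measurable setting when $n\ge 2$; an essential part of investigating the conjecture would therefore be to look for such counterexamples and to isolate geometric conditions on $A$ that do guarantee the $\frac{1}{n}$-concavity.
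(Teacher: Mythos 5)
Your instinct that the statement cannot be proved in full generality is exactly right, and that is the gap you must make explicit: the conjecture as written is \emph{false} for every $n\ge 2$, so the open-ended strategy you sketch cannot be completed. The decisive counterexample is one you could have reached by pushing your own merging-of-components remark: take $A=B_2^n\cup\{2e_1\}$. For $0\le t<\tfrac12$ the two pieces of $A+tB_2^n$ remain disjoint, so
$$
|A+tB_2^n|=|B_2^n|\bigl((1+t)^n+t^n\bigr),
$$
and the $n$-th root of the right-hand side is strictly convex (it is the $\ell_n$-norm of $(1+t,t)$, an affine path not through the origin). Connectedness does not rescue the statement once $n\ge 3$: attaching a long segment $[e_1,le_1]$ (crossed with $[-1,1]^{n-3}$) to the cube $[-1,1]^3\times[-1,1]^{n-3}$ gives, for $t\in[0,1]$, a polynomial $V_A(t)=a_0+a_1t+\dots+a_nt^n$ whose coefficient $a_2$ grows linearly in $l$, so that for $l\ge n^4$ one gets $\frac{n}{n-1}V_A(0)V_A''(0)>V_A'(0)^2$, i.e.\ $(V_A^{1/n})''(0)>0$. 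The correct resolution is therefore: true for $n=1$ (all compact sets, and in fact $V_A$ itself is concave), true for $n=2$ and connected sets, false otherwise.

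Where your proposal overlaps with the actual arguments it is on target. Your one-dimensional proof is essentially the one used: after an arbitrarily small initial dilation $A+t_0B$ is a finite union of intervals, the slope of $V_{A}$ equals $|B|$ times the number of intervals, and that number is non-increasing, so $V_{A}$ is concave. Your proposed differential inequality $VV''\le\frac{n-1}{n}(V')^2$ is also precisely the mechanism behind the positive results: in the plane one combines the isoperimetric inequality $4\pi V_A\le (V_A')^2$ with Fiala's bound $V_A''\le 2\pi(p_A-q_A)\le 2\pi$ for connected sets (established first for finite $A$, then transferred by Hausdorff approximation), while for general compact $A$ and large $t$ the same inequality follows from the equality case of Alexandrov--Fenchel applied to $\eps\mapsto|\eps\conv(A)+B_2^n|$ together with the estimate $|\conv(A)+tB_2^n|-|A+tB_2^n|=O(t^{n-3})$. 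What your write-up lacks, besides the counterexamples, is any argument for the two-dimensional connected case: the ``positive reach'' heuristic you invoke does not supply the topological input (the Euler characteristic bound on $V_A''$) that makes the planar case work.
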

They also showed using the Brunn-Minkowski inequality that this conjecture holds true if $A$ is a convex set. \\

Notice that  Guleryuz, Lutwak, Yang and Zhang \cite{GLYZ} also pursued these analogies between the two theories and more recently, Bobkov and Madiman \cite{BM} established an analogue in Information theory of the Milman's reverse Brunn-Minkowski inequality.\\

In this paper, we investigate Conjecture \ref{conj} and study its relationship with known geometric inequalities. We prove that the conjecture holds true in dimension $1$ for all measurable sets and in dimension $2$ for connected sets.  In dimension $n\ge 3$, we establish that the connectivity hypothesis is not enough and that the conjecture is false in general. We then discuss additional hypotheses which ensure its validity: we conjecture that it holds true for sufficiently large $t$ and we establish it for special sets $A$. More precisely, our main results are contained in the following theorem.

\begin{theo}
Let $n\ge 1$, $A$ be a bounded measurable set. Define $V_{A}(t)=  |A + t B_2^n|$, $t\ge 0$. 
\begin{enumerate}
\item For $n=1$, the function $V_{A}$ is concave on $\R_+$.
\item For $n=2$, if $A$ is connected then $V_{A}^\frac{1}{2}$ is concave on $\R_+$. Moreover there exists $A$ not connected such that $V_{A}^\frac{1}{2}$ is not concave on $\R_+$.
\item For $n\ge 3$, if the function $\varepsilon\mapsto |\varepsilon A+B_2^n|$ is twice continuously differentiable in the neighborhood of $0$, then there exists $t_0$ such that $V_{A}^\frac{1}{n}$ is concave on $[t_0, +\infty)$. Moreover there exists $A$ connected such that $V_{A}^\frac{1}{n}$ is not concave on $\R_+$.
\end{enumerate}
\end{theo}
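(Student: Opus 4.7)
The three cases require quite different techniques.

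For Part~1 ($n=1$), the strategy is elementary. For any $t>0$, the open set $A+(-t,t)$ is a disjoint union of finitely many open intervals; let $N(t)$ denote their number. Then $V_A'(t)=2N(t)$ for a.e.\ $t$, since each component's two endpoints move outward at unit speed. As components can only merge when $t$ grows, $N$ is nonincreasing; so $V_A'$ is nonincreasing and $V_A=V_A^{1/1}$ is concave on $\R_+$.

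For Part~2 ($n=2$ with $A$ connected), the goal is to prove the pointwise a.e.\ inequality $2V_A(t) V_A''(t)\le (V_A'(t))^2$, which is equivalent to concavity of $V_A^{1/2}$. We plan to combine three ingredients: Stach\'o's formula $V_A'(t)=\mathcal{H}^1(\partial(A+tB_2^2))$ valid for a.e.\ $t$; the Gauss--Bonnet identity $V_A''(t)=2\pi\chi(A+tB_2^2)$ a.e., which combined with the connectedness of $A+tB_2^2$ (inherited from that of $A$) yields $\chi\le 1$ and so $V_A''(t)\le 2\pi$; and the planar isoperimetric inequality $(V_A'(t))^2\ge 4\pi V_A(t)$. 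Chaining these, $2V_A V_A''\le 4\pi V_A \le (V_A')^2$. For the counter-example (necessity of connectedness), we propose two far-apart discs and compute $V_A^{1/2}$ directly near the merging time.

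For Part~3 ($n\ge 3$, with $g(\varepsilon):=|\varepsilon A+B_2^n|$ of class $C^2$ near $0$), we write $V_A(t)=t^n g(1/t)$ and set $h:=g^{1/n}$. A direct calculation yields $(V_A^{1/n})''(t)=h''(1/t)/t^3$, so by continuity of $h''$ near $0$ it suffices to prove $h''(0)\le 0$. The key step is the comparison with the convex hull $K:=\conv(A)$: trivially $g(0)=g_K(0)=|B_2^n|$; at first order $g'(0)=g_K'(0)=\int_{S^{n-1}}h_A(u)\,d\mathcal{H}^{n-1}(u)$, since the support functions of $A$ and $K$ coincide; and $g\le g_K$ pointwise (as $A\subset K$), so the nonnegative $C^2$ function $g_K-g$ vanishes to first order at $0$, forcing $g''(0)\le g_K''(0)$. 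Since $K$ is a convex body, the Brunn--Minkowski inequality gives concavity of $\varepsilon\mapsto g_K(\varepsilon)^{1/n}$, hence $h_K''(0)\le 0$. Combining, $h''(0)\le h_K''(0)\le 0$. The borderline case $h_K''(0)=0$ (which forces $K$ to be a Euclidean ball) is handled separately. For the counter-example, we propose to construct a connected but strongly non-convex set such as a dumbbell.

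\textbf{Main obstacle.} The decisive step is the second-order comparison $g''(0)\le g_K''(0)$: it relies essentially on the $C^2$ hypothesis on $g$ to extract a second-order inequality from the pointwise one $g\le g_K$ with matched value and first derivative. The counter-example construction for $n\ge 3$ is also delicate, as it must combine connectedness with enough non-convexity to defeat the asymptotic concavity guaranteed by Part~3.
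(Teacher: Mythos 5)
Your plan follows the same architecture as the paper in all three parts (counting components in dimension $1$; an Euler--characteristic bound on $V_A''$ combined with the isoperimetric inequality in dimension $2$; a second-order comparison with the convex hull plus the equality case of Minkowski's quadratic inequality for $n\ge 3$), but several steps are genuinely incomplete or wrong. In Part~2, for a general compact connected $A$ the function $V_A$ is only known to be a Kneser function with one-sided derivatives; you assert an a.e.\ identity $V_A''=2\pi\chi(A+tB_2^2)$ and then deduce concavity of $V_A^{1/2}$ from an a.e.\ differential inequality, but an a.e.\ inequality $2V_AV_A''\le (V_A')^2$ does not imply concavity of $\sqrt{V_A}$ without controlling the exceptional set and the jumps of $V_A'$. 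The paper circumvents this by first proving the statement for \emph{finite} sets (where $V_A$ is smooth outside finitely many points, Fiala's inequality $V_A''\le 2\pi(p_A(t)-q_A(t))$ applies, and the one-sided derivatives satisfy $(V_A)'_-\ge(V_A)'_+$ by Stach\'o's Kneser property), and then passing to the limit along a dense sequence in Hausdorff distance; this approximation step is absent from your plan. Your counterexample for Part~2 also fails as stated: for two far-apart discs of \emph{equal} radius, $V_A^{1/2}$ is affine before the merging time and stays concave through the merge (after merging the set is connected, so $V_A''\le 2\pi$ and the isoperimetric inequality apply, and $V_A'$ is continuous at the merge because the lens area is $O((t-t^*)^{3/2})$). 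To break $\frac{1}{2}$-concavity you need components of different sizes, and the violation then occurs while the components are still disjoint --- the paper takes $A=B_2^n\cup\{2e_1\}$, for which $V_A(t)=|B_2^n|((1+t)^n+t^n)$ has strictly convex $n$-th root near $0$ --- not near the merging time.

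In Part~3 the key unproved step is the first-order matching $g'(0)=g_K'(0)$ with $K=\conv(A)$: the coincidence of the support functions of $A$ and $K$ does not by itself identify the first variation of $\eps\mapsto|\eps A+B_2^n|$ at $0$ with a mixed volume of $K$, since $A$ is not convex. One needs the estimate $0\le g_K(\eps)-g_A(\eps)=o(\eps)$; the paper imports Kampf's bound $O(\eps^3)$, which yields the first- \emph{and} second-order matching at once and is what your comparison $g''(0)\le g_K''(0)$ really rests on. Two further points need repair: continuity of $h''$ at $0$ together with $h''(0)\le 0$ does \emph{not} give $h''\le 0$ on a neighbourhood --- you need the strict inequality $h''(0)<0$, which requires the equality case of Minkowski's inequality $V(K,B_2^n[n-1])^2\ge |B_2^n|\,V(K[2],B_2^n[n-2])$ (equality iff $K$ is homothetic to the ball), asserted parenthetically but not proved; and the residual ball case must be treated by another mechanism (the paper uses Schneider's identity $A+t\conv(A)=(1+t)\conv(A)$ for $t\ge n$, which makes $V_A^{1/n}$ affine at infinity). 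Finally, ``a dumbbell'' is not a counterexample construction: connectedness plus strong non-convexity is not the right mechanism (in dimension $2$ every connected set works); what is needed is a lower-dimensional appendage whose contribution to the quadratic Steiner coefficient dominates its contribution to volume and surface area, as in the paper's set $([-1,1]^3\cup[e_1,le_1])\times[-1,1]^{n-3}$ with $l\ge n^4$, for which $\frac{n}{n-1}V_A(0)V_A''(0)>V_A'(0)^2$ is verified explicitly. As written, both counterexamples and the regularity and first-order-matching steps are gaps, not details.
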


In the next section, we first explain some notations, then we establish analytical properties of the parallel volume and we explore relationships between this conjecture and known geometric inequalities. In the third section, we study the $\frac{1}{n}$-concavity property of the parallel volume. In the last section, we investigate further analogies between the Information theory and the Brunn-Minkowski theory.

\section{Regularity properties of the parallel volume and links with geometric inequalities}

We work in the Euclidean space $\mathbb{R}^n$, $n \geq 1$, equipped with the $\ell_2^n$ norm $|\cdot |$, whose closed unit ball is denoted by $B_2^n$ and canonical basis is $(e_1,\dots,e_n)$. We also denote $|\cdot |$ the Lebesgue measure in $\mathbb{R}^n$. 
For non-empty sets $A,B$ in $\mathbb{R}^n$ we define their \textit{Minkowski sum}
$$ A+B = \{a + b ; \, a \in A, b \in B \}.$$
We denote by $\interior(A)$, $\overline{A}$, $\partial A$, $\conv(A)$ respectively the interior, the closure, the boundary, the convex hull of the set $A$. A function $f:\R^n\to\R_+$ is $\frac{1}{n}$\textit{-concave} if $f^{\frac{1}{n}}$ is concave on its support. \\

A set $B$ is a \textit{convex body} if $B$ is a compact convex set of $\mathbb{R}^n$ with non-empty interior. If $0$ is in the interior of $B$, then the gauge associated to $B$ is the function $\|\cdot\|_B$ defined by $\|x\|_B=\inf\{t>0 ; x\in tB\}$, for every $x\in\R^n$. Let $A$ be a bounded set. For $x\in\R^n$, we set $d_B(x,A)=\inf\{\|x-y\|_B; y\in A\}$ and we simply denote $d(x,A)=d_{B_2^n}(x,A)$. We denote by $V_{A,B}$ the function defined for $t\ge0$ by 
$$
V_{A,B}(t)=|A+tB|.
$$
Notice that  
$$
 \{x ; d_B(x,A)< t\}\subset A+tB\subset \{x ; d_B(x,A)\le t\}=\overline{A}+t B.
$$
 From the continuity of $V_{A,B}$, see (\ref{federer}) below, we get $|A+tB|=|\overline{A}+tB|$ for $t>0$. Hence we may assume in the following that $A$ is compact. 
For $B=B_2^n$, we simply denote $V_A=V_{A,B_2^n}$ the \textit{(outer) parallel volume function} of $A$ defined on $\R_+$ by
$$V_A(t)=|A+tB_2^n|.$$
The outer Minkowski surface area $|\partial A|$ of $A$ may be defined using $V_A$: if the function $V_A$ admits a right derivative at $0$ then one has 
$$
(V_A)_+'(0)=\lim_{t\to 0^+}\frac{|A+tB_2^n|-|A|}{t}=|\partial A|.
$$

\subsection{Regularity properties of the parallel volume}

Let $A$ be a compact subset of $\R^n$ and $B$ be a convex body  in $\R^n$ containing $0$ in its interior. The function $d_B(\cdot,A)$ is Lipschitz, hence from Federer's co-area formula \cite{Federer},  one has 
\begin{eqnarray}\label{federer}
V_{A,B}(t)=|A+tB|=|A|+\int_0^t \H^{n-1}(\{x; d_B(x,A)=s\})\de s,
\end{eqnarray}
where $\H^{n-1}$ denotes the $(n-1)$-dimensional Hausdorff measure. Therefore the function $V_{A,B}$ is absolutely continuous on $\R_+$.

Stach\'o \cite{Stacho} proved  a better regularity for $V_{A,B}$, he proved namely that the function $V_{A,B}$ is a $n$-Kneser function, which means that for every $0<t_0\le t_1$ and every $\la\ge 1$, one has
\begin{eqnarray}\label{Kneser}
V_{A,B}(\la t_1)-V_{A,B}(\la t_0)\le \la^n(V_{A,B}(t_1)-V_{A,B}(t_0)).
\end{eqnarray}
Stach\'o deduced that for every $0<t_0<t_1$, the function 
\begin{eqnarray*}
t\mapsto V_{A,B}(t)-t^n\frac{V_{A,B}(t_1)-V_{A,B}(t_0)}{t_1^n-t_0^n}
\end{eqnarray*}
is concave on $[t_1,+\infty)$. Thus  $V_{A,B}$  admits right and left derivatives at every $t>0$, which satisfy
\begin{eqnarray}\label{derivative}
(V_{A,B})'_+(t)\le (V_{A,B})'_{-}(t)
\end{eqnarray}
and these two derivatives coincide for all $t>0$ outside a countable set. Hence the outer Minkowski surface area of $A+tB_2^n$ exists for every $t>0$ and one has
\begin{eqnarray}\label{bord}
|\partial(A+tB_2^n)|=\lim_{\eps\to 0^+}\frac{|A+tB_2^n+\eps B_2^n|-|A+tB_2^n|}{\eps}=(V_A)'_+(t).
\end{eqnarray}
In Proposition \ref{C1} below, we show that the function $V_A$ is continuously differentiable on $[\diam(A), +\infty)$.
If $A$ is convex or with sufficiently regular boundary then the equality (\ref{bord}) also holds for $t=0$. For precise statements and comparisons between the outer Minkowski surface area and other measurements of $\partial(A+tB_2^n)$, like the Hausdorff measure, see \cite{Ambrosio}.

\begin{prop}\label{continu}
Let $A$ and $B$ be compact subsets of $\R^n$ with $B$ convex, then  the function $(s,t) \mapsto |sA+tB|$ is continuous on $\R_+ \times \R_+$. Moreover
 the functions 
 $$t\mapsto |A+tB|-t^n|B|\quad \mbox{and}\quad s\mapsto |sA+B|-s^n|A|$$ 
 are non-decreasing. In particular, the function $(s,t) \mapsto |sA+tB|$ is non-decreasing in each coordinate.
\end{prop}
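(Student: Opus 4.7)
The statement splits into three ingredients --- joint continuity of $F(s,t):=|sA+tB|$, and the two monotonicity claims --- from which the last sentence will follow immediately. I plan to treat them in this order, the two monotonicities being the heart of the argument.

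For the monotonicity of $t\mapsto|A+tB|-t^n|B|$, I will exploit the convexity of $B$ to write $A+t_1B=(A+t_0B)+(t_1-t_0)B$ for $0\le t_0\le t_1$ (using $t_1B=t_0B+(t_1-t_0)B$ for convex $B$); Brunn-Minkowski then gives $|A+t_1B|^{1/n}\ge|A+t_0B|^{1/n}+(t_1-t_0)|B|^{1/n}$. Since $A\ne\emptyset$ implies $|A+tB|\ge t^n|B|$, I can write $|A+t_0B|^{1/n}=t_0|B|^{1/n}+\alpha$ for some $\alpha\ge 0$, so that $|A+t_iB|^{1/n}\ge t_i|B|^{1/n}+\alpha$ for $i=0,1$ (with equality at $i=0$). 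Expanding $(t|B|^{1/n}+\alpha)^n-t^n|B|$, the $t^n$-terms cancel and one is left with a polynomial in $t\ge 0$ with nonnegative coefficients, hence a non-decreasing function of $t$, which yields the claim.

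The monotonicity of $s\mapsto|sA+B|-s^n|A|$ appears genuinely harder: since $A$ is not assumed convex, $s_1A\ne s_0A+(s_1-s_0)A$ in general, and a direct Brunn-Minkowski computation in this direction yields only upper bounds on $|s_1A+B|$, which is the wrong sense. My plan is to leverage Stach\'o's $n$-Kneser property (\ref{Kneser}) recorded above. Sending $t_0\to 0^+$ there (legal by continuity of $V_{A,B}$ coming from (\ref{federer})), one obtains $V_{A,B}(\la t_1)-|A|\le\la^n(V_{A,B}(t_1)-|A|)$ for every $\la\ge 1$, i.e., $u\mapsto(V_{A,B}(u)-|A|)/u^n$ is non-increasing on $(0,\infty)$. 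The homogeneity $|sA+B|=s^nV_{A,B}(1/s)$ then recasts the target as $|sA+B|-s^n|A|=(V_{A,B}(u)-|A|)/u^n$ at $u=1/s$, which is non-decreasing in $s$ because $u$ is then decreasing.

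For the continuity of $F$ on $\R_+\times\R_+$, I will split the domain. On $\{s>0\}\times\R_+$, the identity $F(s,t)=s^nV_{A,B}(t/s)$ combined with continuity of $V_{A,B}$ (from (\ref{federer})) gives joint continuity. To reach the boundary $\{s=0\}$, I will use the sandwich $t^n|B|\le F(s,t)\le|s\,\conv(A)+tB|$ (lower bound since $A$ contains some $a_0$, upper bound since $A\subseteq\conv(A)$); the upper bound is a mixed-volume polynomial in $(s,t)$, hence continuous, and both bounds converge to $t_0^n|B|=F(0,t_0)$ as $(s,t)\to(0,t_0)$. The coordinate-wise monotonicity of $F$ is then immediate by writing $F(s,t)=(F(s,t)-s^n|A|)+s^n|A|$ as a sum of two functions non-decreasing in $s$ (the first by applying the second monotonicity to the pair $(A,tB)$), and symmetrically for $t$ via the first monotonicity applied to $(sA,B)$. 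The main obstacle throughout will be the second monotonicity, which appears to genuinely require the Kneser property rather than Brunn-Minkowski alone.
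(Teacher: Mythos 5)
Your proof is correct, but it takes a partly different route from the paper's, and the difference is worth recording. The paper derives \emph{both} monotonicity statements from a single symmetric reformulation of Stach\'o's inequality (\ref{Kneser}): a change of variables turns it into $|s_0A+t_1B|-|s_0A+t_0B|\le |s_1A+t_1B|-|s_1A+t_0B|$ for $0<s_0\le s_1$ and $0<t_0\le t_1$, and the two claims follow by letting $s_0\to0$ with $s_1=1$, respectively $t_0\to0$ with $t_1=1$. You instead prove the $t$-monotonicity by Brunn--Minkowski alone (the expansion of $(t|B|^{1/n}+\alpha)^n-t^n|B|$ as a polynomial in $t$ with nonnegative coefficients is a correct and rather clean computation), and reserve the Kneser property for the $s$-monotonicity, where you rightly observe that the inclusion $s_1A\subset s_0A+(s_1-s_0)A$ makes Brunn--Minkowski point the wrong way for non-convex $A$. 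Your normalized form --- $u\mapsto (V_{A,B}(u)-|A|)/u^n$ non-increasing on $(0,\infty)$ --- is equivalent to the paper's specialization, and the homogeneity step converting it into monotonicity in $s=1/u$ is right; the endpoint values $s_0=0$ and $t_0=0$ are recovered by continuity, as in the paper. The net effect is that your argument isolates exactly which half of the statement genuinely requires Stach\'o's result, at the cost of the paper's more uniform treatment. Your continuity argument is essentially the paper's (homogeneity off $\{s=0\}$ plus a sandwich at the boundary, with $|s\,\conv(A)+tB|$ in place of the paper's $|srB_2^n+tB|$); the one caveat is that invoking (\ref{federer}) directly for $V_{A,B}$ presumes $B$ is full-dimensional with $0$ in its interior, whereas the paper bounds $|A+t'B|-|A+tB|$ by $|A+tB+r(t'-t)B_2^n|-|A+tB|$ precisely so that the co-area formula is only ever applied with the ball $rB_2^n$; your own sandwiching device repairs this in one line, so it is a cosmetic rather than a substantive gap.
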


\begin{proof}

Let us prove the continuity. Let $0 \leq t \leq t'$. Let $r>0$ be such that $A\subset rB_2^n$ and $B\subset rB_2^n$. Then we have
$$
|A+t B|\le|A+t'B|\le |A+tB+r(t'-t)B_2^n|. 
$$
From (\ref{federer}) the function $t'\mapsto |A+tB+t'rB_2^n|$ is continuous at $0$, thus the function $t\mapsto |A+tB|$ is continuous on $\R_+$. Since for $s >0$ and $t \ge0$
$$
|sA+tB| = s^n\left|A+\frac{t}{s}B\right|
$$
then  $(s,t) \mapsto |sA+tB|$ is continuous on $\R_+^* \times \R_+$. We also have for any $s\ge0$ and $t\ge0$
$$
|tB|\le|sA+tB|\le |srB_2^n+tB|
$$
so $(s,t) \mapsto |sA+tB|$ is continuous on $\{0\} \times \R_+$. It follows that the function $(s,t) \mapsto |sA+tB|$ is continuous on $\R_+ \times \R_+$.

The monotonicity follows from (\ref{Kneser}). Indeed, The inequality (\ref{Kneser}) may be written in a different way, as follows
$$
|A+\la t_1B|-|A+\la t_0B|\le |\la A+\la t_1 B|-|\la A+\la t_0B|.
$$
Changing variables, it also means that for every $0< s_0\le s_1$ and $0< t_0\le t_1$
$$
|s_0 A+t_1B|-|s_0A+t_0B|\le |s_1A+t_1B|-|s_1A+t_0B|.
$$
Applied first to $s_1=1$ and $s_0\to 0$, and then to $t_1=1$ and $t_0\to 0$, we deduce that the functions
\begin{eqnarray*}
t\mapsto V_{A,B}(t)-t^n|B|\quad {\rm and}\quad s\mapsto |sA+B|-s^n|A|
\end{eqnarray*}
are non-decreasing. In particular, the function $(s,t) \mapsto |sA+tB|$ is non-decreasing in each coordinate.
\end{proof}

\noindent
{\bf Remark.}
If $A$ and $B$ are any compact it is not necessarily true that the function $V_{A,B}$ is non-decreasing as can be seen from the example of $A=\{0; 4\}$ and $B= [-5,-3]\cup[3,5]$.

\subsection{Links with geometric inequalities}

Let us connect the Costa-Cover conjecture with the Brunn-Minkowski inequality and the isoperimetric inequality. We first establish that the conjecture of Costa-Cover has many equivalent reformulations.

\begin{prop}\label{equiv}
Let $A$ and $B$ be compact sets in $\R^n$, with $B$ convex. The following properties are equivalent.\\
(i) $t\mapsto |A+tB|^\frac{1}{n}$ is concave on $\R_+$.\\
(ii) $s\mapsto |sA+B|^\frac{1}{n}$ is concave on $\R_+$.\\
(iii) $\lambda\mapsto |(1-\la)A+\la B|^\frac{1}{n}$ is concave on $[0,1]$.\\
(iv) $(s,t)\mapsto|sA+tB|^\frac{1}{n}$ is concave on $\R_+\times\R_+$.
\end{prop}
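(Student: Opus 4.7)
The plan rests on two observations. First, $F(s,t):=|sA+tB|^{1/n}$ is positively $1$-homogeneous on $\R_+\times\R_+$, because $|\la sA+\la tB|=\la^n|sA+tB|$ for $\la\ge 0$, and it is continuous there by Proposition \ref{continu}. Second, for any $1$-homogeneous function $F$ on a convex cone, concavity is equivalent to superadditivity: concavity gives $F(x+y)=2F(\tfrac{x+y}{2})\ge F(x)+F(y)$, while conversely superadditivity combined with $1$-homogeneity yields $F(\alpha x+(1-\alpha)y)\ge F(\alpha x)+F((1-\alpha)y)=\alpha F(x)+(1-\alpha)F(y)$. Together these reduce the two-dimensional concavity (iv) to a one-dimensional concavity statement along any transversal of the cone, which is exactly the content of (i), (ii), or (iii).

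The implications (iv)$\Rightarrow$(i), (ii), (iii) come for free: each of (i)--(iii) is concavity of $F$ along a specific line segment contained in $\R_+^2$. For the converse I would prove (iii)$\Rightarrow$(iv) explicitly, since (i)$\Rightarrow$(iv) and (ii)$\Rightarrow$(iv) follow by the same argument applied to the rays $\{s=1\}$ and $\{t=1\}$ instead of the segment joining $(1,0)$ to $(0,1)$. Set $g(\la)=F(1-\la,\la)$; by (iii), $g$ is concave on $[0,1]$. The $1$-homogeneity gives, for $(s,t)\in\R_+^2$ with $s+t>0$,
$$
F(s,t)=(s+t)\,g\!\left(\frac{t}{s+t}\right).
$$
For any $(s_1,t_1),(s_2,t_2)$ with $\sigma_i:=s_i+t_i>0$, the decomposition
$$
\frac{t_1+t_2}{\sigma_1+\sigma_2}=\frac{\sigma_1}{\sigma_1+\sigma_2}\cdot\frac{t_1}{\sigma_1}+\frac{\sigma_2}{\sigma_1+\sigma_2}\cdot\frac{t_2}{\sigma_2}
$$
together with the concavity of $g$ yields the superadditivity inequality $F(s_1+s_2,t_1+t_2)\ge F(s_1,t_1)+F(s_2,t_2)$, and the first observation then upgrades this to full concavity (iv).

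The argument is essentially the classical ``perspective of a concave function is concave'' construction, so I do not expect any serious obstacle. The only small technical point is that the representation $F(s,t)=(s+t)g(t/(s+t))$ breaks down at the origin; but $F(0,0)=0$ by $1$-homogeneity, and the continuity provided by Proposition \ref{continu} extends the concavity obtained on $\R_+^2\setminus\{0\}$ to all of $\R_+\times\R_+$ without difficulty.
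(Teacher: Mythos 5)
Your proof is correct and is essentially the same argument as the paper's: both rest on the $1$-homogeneity of $(s,t)\mapsto|sA+tB|^{1/n}$ and the perspective-of-a-concave-function computation, with continuity (Proposition \ref{continu}) handling the boundary of the cone. The only cosmetic difference is that you package the computation as superadditivity plus homogeneity and start from (iii), whereas the paper verifies the concavity inequality directly starting from (i); the key convex-combination decomposition is identical.
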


\begin{proof}
(iv)$\Longrightarrow$(i), (iv)$\Longrightarrow$(ii) and (iv)$\Longrightarrow$(iii) are clear.  Let us prove that
(i)$\Longrightarrow$(iv), a similar argument easily shows that (ii)$\Longrightarrow$(iv) and (iii)$\Longrightarrow$(iv).
Let $f : \R_+\to\R$ and $g : \R_+ \times\R_+\to \R$ be defined by $f(t)=|A+tB|^\frac{1}{n}$ and $g(s,t)=|sA+tB|^\frac{1}{n}$, for every $s,t\in\R_+$.  For every $t\ge 0$ and $s>0$, we have, from the homogeneity of the volume 
$$g(s,t)=s f\left(\frac{t}{s}\right).$$
Thus for every $\la\in[0,1]$, $s_1, s_2\in(0,+\infty)$ and $t_1,t_2\in\R_+$ we get
$$
g((1-\la)s_1+\la s_2,(1-\la)t_1+\la t_2))= ((1-\la)s_1+\la s_2)f\left(\frac{(1-\la)t_1+\la t_2}{(1-\la)s_1+\la s_2}\right).
$$
Using the concavity of $f$, we deduce that 
\begin{eqnarray*}
f\left(\frac{(1-\la)t_1+\la t_2}{(1-\la)s_1+\la s_2}\right)&=&f\left(\frac{(1-\la)s_1\frac{t_1}{s_1} +\la s_2\frac{t_2}{s_2}}{(1-\la)s_1+\la s_2}\right)\\
&\ge& \frac{(1-\la)s_1f\left(\frac{t_1}{s_1}\right) +\la s_2f\left(\frac{t_2}{s_2}\right)}{(1-\la)s_1+\la s_2}\\
&=& \frac{(1-\la)g(s_1,t_1)+\la g(s_2,t_2)}{(1-\la)s_1+\la s_2}.
\end{eqnarray*}
We deduce that $g$ is concave on $(\R_+^*)^2$. Moreover, $g$ is continuous on $(\R_+)^2$ by Proposition~\ref{continu}. Hence $g$ is concave on $(\R_+)^2$. 
\end{proof}

\noindent
{\bf Remark.}
Notice that if for two fixed compact sets $A$ and $B$, with $B$ convex, the assertion (iii) of Proposition~\ref{equiv} holds true then for every $\la\in[0,1]$,
$$ |(1-\lambda) A + \lambda B|^{\frac{1}{n}} \geq (1- \lambda) |A|^{\frac{1}{n}} + \lambda|B|^{\frac{1}{n}}, $$
which is the Brunn-Minkowski inequality. Hence the conjecture of Costa-Cover ((i) of Proposition~\ref{equiv}) implies the Brunn-Minkowski inequality in the case where one set is convex.\\

Let us study the connection with the isoperimetric inequality. The Costa-Cover conjecture implies that for every $t\ge 0$ and every sufficiently regular compact set $A$
$$
\frac{1}{n} \frac{|\partial A|_{n-1}}{|A|^{1-\frac{1}{n}}}= (V_A^{1/n})'_+(0) \ge (V_A^{1/n})'_+(t)\ge\lim_{t\to+\infty} (V_A^{1/n})'_+(t)=\frac{1}{n} \frac{|\partial B_2^n|_{n-1}}{|B_2^n|^{1-\frac{1}{n}}} ,
$$
which is the isoperimetric inequality.
This would give a non-increasing path from $\frac{|\partial A|_{n-1}}{|A|^{1-\frac{1}{n}}}$ to $\frac{|\partial B_2^n|_{n-1}}{|B_2^n|^{1-\frac{1}{n}}}$ through the family
$$ \left( \frac{|\partial(A + t B_2^n)|_{n-1}}{|A + t B_2^n|^{1-\frac{1}{n}}} \right)_{t \in \R_+}. $$

We may apply the same arguments for any convex body $B$ instead of $B_2^n$. Thus, the conjecture that $t\mapsto V_{A,B}(t)^{1/n}$ is concave on $\R_+$  implies the following generalized isoperimetric inequality, also known as Minkowski's first inequality proved for example in  \cite{Schneider}, 
$$
 \frac{|\partial_B A|_{n-1}}{|A|^{1-\frac{1}{n}}}\ge  \frac{|\partial_B B|_{n-1}}{|B|^{1-\frac{1}{n}}}=n|B|^\frac{1}{n}.
$$

%%%%%%%%%%%%%%%%%% CONCAVITY %%%%%%%%%%%%%%%%%%%%%%%%%%

\section{The $\frac{1}{n}$-concavity of the parallel volume}

Recall that for $t \geq 0$, $V_A(t) = |A + t B_2^n|$ and that Costa and Cover \cite{Costa} conjectured the $\frac{1}{n}$-concavity of $V_A$ on $\R_+$, for every compact $A$. They also noticed that their conjecture holds true for $A$ being convex.  Let us repeat their argument. For every $\lambda \in [0,1]$ and $t$, $s \in \mathbb{R}_+$, from the Brunn-Minkowski inequality, one obtains
\begin{eqnarray*}
|A + ((1-\lambda)t + \lambda s) B_2^n|^{\frac{1}{n}} & = & |(1-\lambda)(A + tB_2^n) + \lambda (A + sB_2^n)|^{\frac{1}{n}} \\ & \geq &  (1-\lambda)|A + tB_2^n|^{\frac{1}{n}} + \lambda|A + sB_2^n|^{\frac{1}{n}}.
\end{eqnarray*}
Notice that from the same argument we deduce that for every convex sets $A$ and $B$, the function $V_{A,B}(t)=|A+tB|$ is $\frac{1}{n}$-concave on $\R_+$.
Hence for convex sets $A$ and $B$, the properties (i)-(iv) of Proposition~\ref{equiv} holds true. In this case, the $\frac{1}{n}$-concavity of $V_{A,B}$ on $\R_+$ is equivalent to the Brunn-Minkowski inequality (and true).

\subsection{In dimension 1}

Let us prove the Costa-Cover conjecture in dimension 1.

\begin{prop}\label{theo:2.7}
Let $A$ be a compact set in $\R$ and $B$ be a convex body in $\R$, then $t \mapsto V_{A,B}(t)= |A + t B|$ is concave on $\mathbb{R}_+$.
\end{prop}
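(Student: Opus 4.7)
The plan is to reduce the statement to an explicit formula for the $t$-neighborhood of $A$. Since $B$ is a convex body in $\R$, it is a closed interval, and by translation invariance of Lebesgue measure we may assume $B=[-r,r]$ for some $r>0$. After rescaling $t\mapsto t/r$ (which preserves concavity), it suffices to show that $s\mapsto |A_s|$ is concave on $\R_+$, where $A_s = A+[-s,s]$ is the closed $s$-neighborhood of $A$.

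Next I would exploit the one-dimensional structure of $A$. Set $m=\min A$ and $M=\max A$. The open set $[m,M]\setminus A$ is a countable disjoint union of open intervals $G_i=(a_i,b_i)$ with lengths $\ell_i = b_i-a_i$, and $\sum_i \ell_i \le M-m$. A short case analysis on each gap shows that
$$
|A_s \cap G_i| = \min(\ell_i,\, 2s),
$$
because the part of $G_i$ within distance $s$ of $A$ is $(a_i, a_i+s] \cup [b_i-s, b_i)$ when $\ell_i > 2s$, and all of $G_i$ otherwise. Adding the contribution $2s$ coming from $[m-s,m) \cup (M,M+s]$ and the mass of $A$ itself yields the explicit formula
$$
|A_s| = |A| + 2s + \sum_i \min(\ell_i,\, 2s).
$$

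Concavity is then immediate: $|A|$ is constant, $2s$ is affine, and each function $s\mapsto \min(\ell_i, 2s)$ is concave on $\R_+$ as the minimum of two affine functions. The partial sums are therefore concave, and since $\sum_i \min(\ell_i, 2s) \le \sum_i \ell_i \le M-m$, they converge pointwise to $|A_s|$ as the number of summands tends to infinity. A pointwise limit of concave functions is concave, so $s\mapsto |A_s|$ is concave on $\R_+$, which is the claim.

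The only mildly delicate point is the decomposition of $[m,M]\setminus A$ into countably many open intervals, which is a standard fact about open subsets of $\R$; everything else reduces to the one-line case analysis above. It is worth noting that this argument is genuinely one-dimensional: in higher dimensions the parallel volume cannot be written as a sum of concave pieces of this kind, which is consistent with the failure of Conjecture~\ref{conj} for $n\ge 3$ established later in the paper.
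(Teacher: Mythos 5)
Your proof is correct, and it takes a genuinely different route from the paper's. The paper first replaces $A$ by $A+t_0B$ for an arbitrary $t_0>0$, which is a \emph{finite} disjoint union of closed intervals, observes that $V_{A,B}$ is then piecewise affine with slope $|B|\cdot N(t)$ where $N(t)$ is the (non-increasing) number of intervals composing $A+tB$, and concludes by continuity at $0$; concavity on all of $\R_+$ is obtained by letting $t_0\to 0$. You instead work with the original compact set directly, decompose $[\min A,\max A]\setminus A$ into its countably many gaps $G_i$ of lengths $\ell_i$, and derive the closed formula $|A_s|=|A|+2s+\sum_i\min(\ell_i,2s)$, from which concavity is read off termwise (the passage to the countable sum via pointwise limits of concave functions is handled correctly, using $\sum_i\ell_i<\infty$). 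The two arguments are close in spirit --- your formula differentiates a.e.\ to $2\bigl(1+\#\{i:\ell_i>2s\}\bigr)$, which is exactly the paper's ``slope proportional to the number of components'' observation --- but your version is more explicit and avoids both the reduction step and the limiting argument in $t_0$; in exchange, the paper's reduction is shorter to state once one accepts that $A+t_0B$ has finitely many components (which itself secretly uses the same fact, namely that only finitely many gaps have length exceeding $t_0|B|$). One small point worth making explicit in your write-up: for a general convex body $B=[\alpha,\beta]$ the set $A+tB$ is a \emph{translate} (by $t(\alpha+\beta)/2$) of $A+t[-r,r]$ with $r=(\beta-\alpha)/2$, so the normalization to a symmetric interval costs a $t$-dependent translation; this is harmless precisely because you only ever take Lebesgue measures, but it deserves a half-sentence.
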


\begin{proof}

We note that in dimension $1$, for $t_0>0$, $A + t_0 B$ is a disjoint finite union of intervals. Thus, by setting $A + t_0 B$ for an arbitrary $t_0 > 0$ instead of $A$, we can assume that $A = \cup_{i=1}^N [a_i,b_i]$, with $a_i, b_i \in \R$, $N \in \mathbb{N}^*$. Thus, for $t$ sufficiently small,
$$V_{A,B}(t)= |A + t B| = \sum_{i=1}^N (b_i-a_i+|B|t) = \sum_{i=1}^N (b_i-a_i) + |B|Nt. $$
Thus $V_{A,B}$ is piecewise affine on $\mathbb{R}_+^*$. Moreover, when $t$ increases, the slope of $V_{A,B}$ is non-increasing since the number of intervals composing $A + t B$ is non-increasing. Using that $V_{A,B}$ is continuous on $\mathbb{R}_+$, we conclude that it is concave on $\mathbb{R}_+$. 
\end{proof}

\noindent
{\bf Remarks.}
For arbitrary compact sets $A$ and $B$, the function $V_{A,B}$ is not necessarily concave as can be seen from the example of $A=\{0; 4\}$ and $B= [-5,-3]\cup[3,5]$, the same example which was given in the remark after Proposition~\ref{continu} to show that the function $V_{A,B}$ is not necessarily increasing.

\subsection{In dimension 2}

We first prove the Costa-Cover conjecture for compact connected sets in dimension 2.

\begin{theo}\label{theo:2.9}

Let $A$ be a compact subset of $\R^2$. Then, $V_A : t \mapsto |A+tB_2^2|$ is $\frac{1}{2}$-concave on $\R_+$.

\end{theo}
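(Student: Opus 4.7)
The strategy is to reduce the $\frac{1}{2}$-concavity to the planar isoperimetric inequality, via a local Steiner formula for the parallel body $A_t:=A+tB_2^2$. The hypothesis that $A$ is connected (implicit in the statement, as witnessed by the counterexample announced in the main theorem) is essential throughout.

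First, for every $t>0$, $A_t$ is compact and connected, and each $x\in\partial A_t$ has a closest point $a\in A$ with $|x-a|=t$, so the open disk $B(a,t)$ lies in $A_t$ and is tangent to $\partial A_t$ at $x$. This ``interior osculating disk'' condition makes $A_t$ a set of positive reach, and Federer's Steiner formula then gives, for $s$ in a right neighborhood of $0$,
$$V_A(t+s)=V_A(t)+L(A_t)\,s+\pi\chi(A_t)\,s^2,$$
where $L(A_t)=\H^{1}(\partial A_t)$ is the perimeter and $\chi(A_t)=1-k(t)$ is the Euler characteristic, with $k(t)$ the number of bounded components of $\R^2\setminus A_t$. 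As $t$ grows, holes can only shrink or merge with the exterior, so $k(\cdot)$ is non-increasing and therefore locally constant off a countable set $T\subset(0,+\infty)$. On each interval of $(0,+\infty)\setminus T$, the Steiner formula shows that $V_A$ is quadratic with $V_A'(t)=L(A_t)$ and $V_A''(t)=2\pi(1-k(t))\le 2\pi$.

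Next, I would apply the planar isoperimetric inequality $L(A_t)^2\ge 4\pi V_A(t)$ (valid for any planar set of finite perimeter, even with holes) to conclude, on $(0,+\infty)\setminus T$,
$$V_A'(t)^2=L(A_t)^2\ge 4\pi V_A(t)\ge 2\,V_A(t)\,V_A''(t),$$
which is exactly the infinitesimal condition $(V_A^{1/2})''(t)\le 0$. Hence $V_A^{1/2}$ is concave on each connected component of $(0,+\infty)\setminus T$.

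Finally, I would glue the pieces by invoking Stach\'o's estimate (\ref{derivative}): $(V_A)'_+(t)\le (V_A)'_-(t)$ for every $t>0$, which transfers to $(V_A^{1/2})'_+(t)\le(V_A^{1/2})'_-(t)$ since $V_A(t)>0$. Combined with the piecewise concavity above, the right derivative of $V_A^{1/2}$ is non-increasing on each component of $(0,+\infty)\setminus T$ and has non-positive jumps at points of $T$, hence is globally non-increasing on $(0,+\infty)$; together with the continuity of $V_A$ from Proposition~\ref{continu}, this yields concavity of $V_A^{1/2}$ on $(0,+\infty)$, extended to $\R_+$ by continuity at $0$. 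The main obstacle is the rigorous justification of the Steiner formula with the explicit bound $\chi(A_t)\le 1$: it rests on Federer's theory of sets of positive reach and on the planar-topological identity $\chi(A_t)=1-k(t)$, and it is exactly at this bound that the connectedness hypothesis enters in a way that cannot be dispensed with, in agreement with the disconnected counterexample of the main theorem.
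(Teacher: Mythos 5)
Your overall differential strategy coincides with the paper's: combine the planar isoperimetric inequality $V_A'(t)^2\ge 4\pi V_A(t)$ with the curvature bound $V_A''(t)\le 2\pi$ to get $(V_A^{1/2})''\le 0$, and glue across exceptional points using $(V_A)'_+\le(V_A)'_-$. The genuine gap is in how you justify $V_A''(t)\le 2\pi$. The inference ``interior osculating disk at every boundary point $\Rightarrow$ $A_t$ has positive reach'' is false, and with it the exact local Steiner expansion. Take $A=\partial([0,1]^2)$ (compact, connected): every point of $\partial A_t$ admits an interior tangent disk of radius $t$, yet points on the diagonal just inside the inner hole $(t,1-t)^2$ near one of its corners have two nearest points in $A_t$, so $\mathrm{reach}(A_t)=0$; moreover $V_A(t)=8t+(\pi-4)t^2$ for $t<1/2$, so $V_A''=2(\pi-4)\neq 2\pi\chi(A_t)=0$ --- your claimed identity $V_A(t+s)=V_A(t)+L(A_t)s+\pi\chi(A_t)s^2$ fails (only the \emph{inequality} $V_A''\le 2\pi\chi(A_t)$, which is Fiala's theorem, survives). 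For $A$ the boundary of an eccentric ellipse, $V_A$ is not even piecewise quadratic. A second error: $k(t)$ is not non-increasing --- for a horseshoe-shaped $A$, a new bounded complementary component is created when the opening closes up --- so the countability of your exceptional set $T$ is not established either.

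The paper avoids all of this by proving the second-order bound only for \emph{finite} sets $A=\{x_1,\dots,x_N\}$, where $V_A$ is smooth outside finitely many critical radii (half-distances between points and circumradii of triples) and Fiala's inequality $V_A''(t)\le 2\pi(p_A(t)-q_A(t))$ yields $V_A''\le 2\pi$ as soon as $A+tB_2^2$ is connected; it then approximates a general compact connected $A$ by finite subsets in Hausdorff distance and uses that a pointwise limit of concave functions is concave. To repair your argument you would need either this approximation step or an entirely different justification of a weak a.e.\ bound $V_A''\le 2\pi$ for arbitrary compact connected $A$; as written, the positive-reach/Steiner step on which everything rests does not hold.
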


\begin{proof}

We proceed by approximating $A$ by finite sets, hence let us first assume that $A$ is finite, $A=\{x_1,\dots, x_N\}$. For $t>0$, let $p_A(t)$ be the number of connected components of $A+tB_2^2$ and $q_A(t)$ be the genus of $A+tB_2^2$. Notice that the functions $p_A$ and $q_A$ are piecewise constants and that $V_A$ is infinitely differentiable at every $t>0$, except at those $t$'s which are equal to $\frac{|x_i-x_j|}{2}$ for some $i,j \in \{1, \dots, N\}$ or to the radius of the circumscribed circle of a triangle $(x_i, x_j, x_k)$; hence there are only a finite number of them, $t_1 < \dots < t_m$. 

We use a key result established by Fiala in the context of Riemannian manifolds, see \cite{Fiala}, first part, section 9 "vraies parall\`eles ": for every $t \in (0 ; + \infty) \setminus \{t_1,\dots,t_m\}$, 
$$ V_A''(t)\le 2 \pi (p_A(t)-q_A(t)). $$
Notice that $p_A(t)-q_A(t)$ is equal to the Euler-Poincar\'e characteristic of $A + tB_2^2$. 

Now, we consider $t_0 \in \R_+$ such that $A+t_0B_2^2$ is connected. Then for every $t \geq t_0$, $A+tB_2^2$ is connected. Hence for every $t \in (t_0;+\infty) \setminus \{t_1, \dots, t_m\}$,
\begin{eqnarray}\label{2pi}
V_A''(t) \leq 2 \pi.
\end{eqnarray}
Let us prove that $V_A$ is $\frac{1}{2}$-concave on $(t_0,+\infty)$. By the  isoperimetric inequality, we have for every $t \in (t_0;+\infty) \setminus \{t_1, \dots, t_m\}$,
$$ 4 \pi |A + t B_2^2| \leq |\partial(A + t B_2^2)|^2, $$
we write this in this form
$$ 4 \pi V_A(t) \leq V_A'(t)^2, $$
thus, using (\ref{2pi}),
$$ 2 V_A(t)V_A''(t) \leq V_A'(t)^2. $$
Hence $\left(\sqrt{V_A}\right)''(t)\le 0$. 
We conclude that $V_A$ is $\frac{1}{2}$-concave on $(t_i , t_{i+1})$, for all $i\le m-1$ and on $(t_m,+\infty)$. From (\ref{derivative}) we have $(V_A)'_-(t_i)\ge (V_A)'_+(t_i)$, thus $V_A$ is $\frac{1}{2}$-concave on $(t_0 , + \infty)$.

Let us then consider a compact connected set $A$ of $\R^2$. Let $t_0>0$. Let $(x_N)_{N \in \N^*}$ be a  dense sequence in $A$. We denote, for $N \in \N^*$, $A_N = \{ x_1, \dots, x_N \}$. There exists $N_0 \in \N^*$ such that for every $N \geq N_0$, $A_N + t_0 B_2^2$ is connected. For every $N \geq N_0$, we have shown that $V_{A_N}$ is $\frac{1}{2}$-concave on $(t_0 ; + \infty)$. Moreover the sequence $(A_N)_N\to A$ in the Hausdorff distance, thus by denoting $d_N=d_H(A_N,A)$, the Hausdorff distance, one has, for every $t>0$ 
$$
A_N+tB_2^2\subset A+tB_2^2\subset A_N+(t+d_N)B_2^2.
$$
Applying the right hand side inclusion to $t$ replaced by $t-d_N$ where $N$ satisfies $d_N<t$, we deduce
$$
A+(t-d_N)B_2^2\subset A_N+tB_2^2\subset A+tB_2^2.
$$
Hence by continuity of the function $V_A$ at the point $t$,
$$ \lim_{N \to + \infty} V_{A_N}(t) = V_A(t). $$
It follows that $\sqrt{V_A}$ is the pointwise limit of a sequence of concave functions, hence $V_A$ is $\frac{1}{2}$-concave on $(t_0 ; + \infty)$, for every $t_0>0$. We conclude that $V_A$ is $\frac{1}{2}$-concave on $\R_+$.
\end{proof}

\noindent
{\bf Remarks.}
\begin{enumerate}
\item In the proof of Theorem~\ref{theo:2.9}, from the bound $V_A''(t) \leq 2 \pi(p_A(t)-q_A(t))$ obtained for every finite set $A$ and for every $t>0$ outside a finite number of points, one deduces that for every compact subset $A$ of $\R^2$ with finite connected components $p_A$, the function $t \mapsto V_A(t) - p_A \pi t^2$ is concave on $(0;+\infty)$. From Steiner's formula one has 
$$
V_{\conv(A)}(t) =  |\conv(A)| + t|\partial(\conv(A))| + \pi t^2.
$$
If $A$ is connected, it follows that
$$V_{\conv(A)}(t)-V_A(t) =  |\conv(A)| + t|\partial(\conv(A))| + \pi t^2- V_A(t) $$  
is convex  as the sum of an affine function and a convex function. Notice that this complements the result of Kampf \cite{Kampf1} who proved that $V_{\conv(A)}(t)-V_A(t)$ tends to $0$ as $t\to +\infty$. 
\item If in Theorem~\ref{theo:2.9} we replace $B_2^2$ by an ellipsoid, $i.e.$ by $T(B_2^2)$ where $T$ is an invertible linear transformation,  then the result holds since
$$ |A+tT(B_2^2)| = |T(T^{-1}(A)+tB_2^2)| = |\det(T)||T^{-1}(A) + tB_2^2|. $$
\end{enumerate}

For a non-connected set $A$, the next proposition shows that the function $V_A$ is not necessarily $\frac{1}{n}$-concave on $\R_+$ in dimension $n\ge 2$.

\begin{prop}
Let $n \geq 2$. We set $A=B_2^n \cup \{ 2 e_1\}$. The function $V_A(t) = |A+tB_2^n|$ is not $\frac{1}{n}$-concave on $\R_+$.
\end{prop}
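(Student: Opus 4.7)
The plan is to exploit the disjoint structure of $A+tB_2^n$ for $t$ in a small right neighborhood of $0$ to obtain an explicit formula for $V_A$, and then show that $V_A^{1/n}$ is in fact strictly \emph{convex} on that neighborhood, which directly contradicts $\frac{1}{n}$-concavity on $\R_+$.

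First I would observe that for $t\in[0,1/2)$ the two pieces $(1+t)B_2^n$ and $2e_1+tB_2^n$ making up $A+tB_2^n$ are disjoint: every point of the first has norm at most $1+t$ and therefore lies at distance at least $2-(1+t)=1-t>t$ from $2e_1$. Hence $V_A(t)=|B_2^n|\bigl[(1+t)^n+t^n\bigr]$ on $[0,1/2)$.

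Next I would set $f(t)=(1+t)^n+t^n$ and analyse the sign of $(f^{1/n})''$ on $(0,1/2)$. From the general formula $(f^{1/n})''=\tfrac{1}{n^2}f^{1/n-2}\bigl[nff''-(n-1)(f')^2\bigr]$, using $f'(t)=n\bigl[(1+t)^{n-1}+t^{n-1}\bigr]$ and $f''(t)=n(n-1)\bigl[(1+t)^{n-2}+t^{n-2}\bigr]$, I would expand and factor to obtain $nff''-(n-1)(f')^2=n^2(n-1)\,t^{n-2}(1+t)^{n-2}\bigl[(1+t)^2-2t(1+t)+t^2\bigr]$. The bracket collapses to $1$ via the identity $(1+t)-t=1$, so the whole expression equals $n^2(n-1)\,t^{n-2}(1+t)^{n-2}$, which is strictly positive on $(0,1/2)$ for $n\geq 2$.

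Consequently $V_A^{1/n}$ is strictly convex on the non-degenerate interval $(0,1/2)$, and therefore cannot be concave on $\R_+$. The key simplification is the collapse of four mixed products into the single positive monomial $n^2(n-1)\,t^{n-2}(1+t)^{n-2}$ via $(1+t-t)^2=1$; without this algebraic cancellation the sign analysis would be noticeably more delicate as $n$ varies. A shorter variant avoiding second derivatives would be to note that $(V_A^{1/n})'_+(0)=|B_2^n|^{1/n}$ from the explicit formula, so concavity on $\R_+$ would force $V_A^{1/n}(1/2)\leq\tfrac{3}{2}|B_2^n|^{1/n}$, equivalently $(3/2)^n+(1/2)^n\leq(3/2)^n$, which is absurd.
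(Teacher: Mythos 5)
Your proof is correct and follows essentially the same route as the paper: for $t\in[0,\tfrac12)$ the two pieces of $A+tB_2^n$ are disjoint, so $V_A(t)=|B_2^n|\left[(1+t)^n+t^n\right]$, whose $\tfrac1n$-power is strictly convex. The paper simply asserts this last fact, whereas you supply the explicit (and correct) computation $nff''-(n-1)(f')^2=n^2(n-1)t^{n-2}(1+t)^{n-2}>0$, plus a clean first-derivative shortcut; both are fine.
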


\begin{proof}
For every $t \in [0,\frac{1}{2})$, we have
$$ |A + t B_2^n| = |(B_2^n \cup \{ 2 e_1\}) + tB_2^n| = |B_2^n + tB_2^n| + |t B_2^n| = |B_2^n|((1+t)^n+t^n). $$
Since the $\frac{1}{n}$-power of this function is not concave (it is strictly convex),  $V_A$ is not $\frac{1}{n}$-concave on $\R_+$ for $n\ge 2$.
\end{proof}

\noindent
{\bf Remark.} This counterexample shows that the Brunn-Minkowski inequality doesn't imply the $\frac{1}{n}$-concavity of the parallel volume for non convex sets. \\

\subsection{In dimension $n\ge 3$}

We may ask if the Costa-Cover conjecture still holds for connected sets in dimension $n \ge3$. The next proposition shows that this is false: even for star-shaped body, the function $V_A$ is not necessarily $\frac{1}{n}$-concave on $\R_+$.

\begin{prop}\label{theo:2.6}

Let $n \geq 3$. We set $A=([-1,1]^3 \cup [e_1, l e_1] )\times [-1,1]^{n-3} $, where $l \ge n^4$. The function $V_A(t) = |A+tB_2^n|$ is not $\frac{1}{n}$-concave on $\R_+$.

\end{prop}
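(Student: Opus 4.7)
The plan is to expand $V_A(t)$ to order $t^2$ at $t=0^+$ and to show that, once $l\ge n^4$ with $n\ge 3$, the $t^2$-coefficient of the induced expansion of $V_A^{1/n}$ is strictly positive; this forces $V_A^{1/n}$ to be strictly convex on a small right-neighborhood of $0$, which precludes $\frac{1}{n}$-concavity on $\R_+$. The key point is that attaching the thin $(n-2)$-dimensional whisker $[1,l]\times\{0\}^2\times[-1,1]^{n-3}$ to the cube adds a $t^2$ contribution proportional to $l$ (two codimensions of tube around the whisker), while leaving $V_A(0)$ and $V_A'(0^+)$ bounded in $l$.

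Decompose $A=C\cup W$ with $C=[-1,1]^n$ and $W=[1,l]\times\{0\}^2\times[-1,1]^{n-3}$, and apply inclusion--exclusion, $V_A(t)=V_C(t)+V_W(t)-|(C+tB_2^n)\cap(W+tB_2^n)|$. For the cube, Steiner's formula gives $V_C(t)=2^n+n\,2^n\,t+\binom{n}{2}\pi\,2^{n-2}\,t^2+O(t^3)$. For the whisker, the squared distance to $W$ equals $d(y_1,[1,l])^2+y_2^2+y_3^2+\sum_{j=4}^{n}d(y_j,[-1,1])^2$; splitting $\R^n$ according to which of these distances vanish, the ``bulk'' region $\{y_1\in[1,l],\,y_j\in[-1,1]\text{ for }j\ge 4\}$ contributes $(l-1)\,2^{n-3}\,\pi\,t^2$ (an $(n-2)$-volume times a $2$-dimensional normal disk of radius $t$), while the $(n-3)$-dimensional boundary faces of $W$ sit in a $3$-dimensional normal tube and hence enter only at order $t^3$, so $V_W(t)=\pi(l-1)\,2^{n-3}\,t^2+O(t^3)$. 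Finally, any point of the overlap satisfies $y_1\in[1-t,1+t]$ (combining $d(y,C)\le t$ and $d(y,W)\le t$), $y_2^2+y_3^2\le t^2$, and $y_j\in[-1-t,1+t]$ for $j\ge 4$, hence the overlap has volume $O(t^3)$.

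Collecting, $V_A(0)=2^n$, $V_A'(0^+)=n\,2^n$, and the $t^2$-coefficient of $V_A(t)$ is $\pi\,2^{n-3}[n(n-1)+l-1]$. Expanding $V_A^{1/n}=2+2t+ct^2+o(t^2)$ then yields $c=\frac{\pi[n(n-1)+l-1]-4n(n-1)}{4n}$, and $c>0$ is equivalent to $\pi(l-1)>(4-\pi)n(n-1)$, which is satisfied whenever $l\ge n^4$ and $n\ge 3$. Therefore the secant slope $(V_A(t)^{1/n}-2)/t=2+ct+o(t)$ is strictly increasing on some $(0,t_0)$, which is incompatible with the concavity of $V_A^{1/n}$ on $\R_+$. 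The main technical effort is the tube bookkeeping: pinning down the $t^2$-coefficient of $V_W$ by identifying exactly which faces of $W$ contribute at each order in $t$, and verifying that the overlap enters only at order $t^3$.
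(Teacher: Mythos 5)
Your proof is correct and follows essentially the same route as the paper: both expand $V_A(t)$ to second order at $t=0$, observe that the whisker contributes a $t^2$-term growing linearly in $l$ while $V_A(0)=2^n$ and $V_A'(0^+)=n2^n$ are independent of $l$, and conclude that $V_A^{1/n}$ cannot be concave near $0$ once $l\ge n^4$ (the paper computes $(V_A^{1/n})''(0)>0$ from an exact disjoint decomposition of $A+tB_2^n$, while you use inclusion--exclusion with an $O(t^3)$ overlap bound and a secant-slope argument, which is the same idea in substance and avoids any differentiability issue). Your $t^2$-coefficient $\pi\,2^{n-3}\bigl(n(n-1)+l-1\bigr)$ is in fact the correct one (the paper's printed $a_2$ carries an extra factor $\tfrac{2}{(n-1)(n-2)}$ on the $l-1$ term, agreeing with yours only when $n=3$); either value is linear in $l$ and satisfies $\tfrac{n}{n-1}V_A(0)V_A''(0)>V_A'(0)^2$ for $l\ge n^4$, so the conclusion is unaffected.
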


\begin{proof}
Define $C=\{0\}^3\times[-1,1]^{n-3}$. For $t\in[0, 1]$, we have
\begin{eqnarray*}
|A + t B_2^n| & = & |[-1,1]^n + t B_2^n| + |[(1+t)e_1,le_1] +C + tB_2^{n}\cap e_1^{\perp}| \\ & ~ & \qquad  \qquad + |\{le_1\} +C + t (B_2^n)^+|
\end{eqnarray*}
where
$$ (B_2^n)^+ = \{x \in B_2^n ; x_1 \geq 0 \}. $$
Using Steiner's formula for each term (see for example \cite{Schneider} p.294 for the second term), we get that for $t\in[0,1]$
$$V_A(t)=  a_0 + a_1 t + \cdots + a_nt^n, $$
with $a_0 = 2^n$, $a_1 = n2^n$ and 
$$a_2 =2^{n-3}\pi\left(\frac{2(l-1)}{(n-1)(n-2)}+n(n-1)\right).$$ 
Since $l\ge n^4$, it follows directly that 
$$
\frac{n}{n-1}V_A(0)V_A''(0)-V_A'(0)^2>0
$$ 
Hence $(V_A^{1/n})''(0)>0$, thus $V_A^{1/n}$ is not concave in the neighbourhood of $0$.
\end{proof}

We have seen that the Costa-Cover conjecture does not hold in general. We still conjecture that the following weaker form may hold.

\begin{conj}\label{conj2}

Let $A$ be a compact subset of $\mathbb{R}^n$ and $B$ be a convex body in $\R^n$. Then there exists $t_0$ such that the function $V_{A,B}(t)= |A + tB|$ is $\frac{1}{n}$-concave on $[t_0, + \infty)$.

\end{conj}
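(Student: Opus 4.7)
The plan is to reduce the large-$t$ behaviour of $V_{A,B}^{1/n}$ to the small-$\eps$ behaviour of the function $\eps\mapsto f(\eps):=|\eps A+B|$. By homogeneity $V_{A,B}(t)=t^n f(1/t)$, so setting $g:=f^{1/n}$ we have $V_{A,B}(t)^{1/n}=t\,g(1/t)$, and a direct computation gives
$$
\left(V_{A,B}^{1/n}\right)''(t)=\frac{g''(1/t)}{t^3}
$$
wherever the derivatives make sense. Hence it suffices to prove that $g$ is concave on some right-neighbourhood $(0,\eps_0]$ of $0$; the threshold $t_0=1/\eps_0$ will then do the job.

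To show $g''\le 0$ near $0$, I would compare $f$ with its convex-hull counterpart. Let $K=\conv(A)$ and $F(\eps)=|\eps K+B|$. Then $F$ is the Steiner polynomial of $K$ relative to $B$, and the Brunn-Minkowski inequality yields that $\widehat g:=F^{1/n}$ is concave on $\R_+$; in particular $\widehat g''(0)\le 0$. The second ingredient is Kampf's theorem $V_{\conv(A)}(t)-V_A(t)\to 0$ as $t\to +\infty$ (which extends from $B_2^n$ to a general convex body $B$ by comparing $B$ with inscribed and circumscribed Euclidean balls); translated into the variable $\eps=1/t$ it reads
$$
F(\eps)-f(\eps)=o(\eps^n)\qquad(\eps\to 0^+).
$$
For $n\ge 2$ this is already $o(\eps^2)$, so if $f$ is of class $C^2$ on a right-neighbourhood of $0$, then $f(0)=F(0)$, $f'(0^+)=F'(0)$ and $f''(0^+)=F''(0)$, whence $g''(0^+)=\widehat g''(0)\le 0$ and the conclusion follows by continuity of $g''$. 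This is essentially the content of the partial result in the main theorem obtained under the $C^2$ hypothesis on $\eps\mapsto|\eps A+B_2^n|$.

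To attack the conjecture in full, the genuine obstacle is to remove the $C^2$ assumption. A priori Stach\'o's theorem only gives one-sided derivatives of $V_{A,B}$, and for an arbitrary compact $A$ the function $f$ need not be twice differentiable at $0$. A natural attempt is to approximate $A$ in Hausdorff distance by sets $A_N$ for which $f_N(\eps)=|\eps A_N+B|$ is piecewise real-analytic near $0$ (for instance, $A_N$ a finite $\frac{1}{N}$-net, or a finite union of polytopes), apply the above argument to each $A_N$, and pass to the limit using the continuity of the parallel volume (Proposition~\ref{continu}) and the pointwise-limit-of-concave-functions trick from the proof of Theorem~\ref{theo:2.9}. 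The hard part is to ensure that the concavity thresholds $t_0(A_N)$ stay bounded as $N\to\infty$; this amounts to a quantitative strengthening of Kampf's theorem, uniform over the approximating family, for instance a bound of the form $F(\eps)-f(\eps)\le C_A\eps^{n+\delta}$ with $C_A$ depending only on $\diam(A)$. Obtaining such a uniform rate is in my view the essential analytic obstruction standing between the partial result and the full conjecture.
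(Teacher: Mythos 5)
First, note that the statement you were asked to prove is a \emph{conjecture} which the paper itself leaves open: it is established only in dimension $1$, in dimension $2$ for $B=B_2^2$, and for $n\ge 3$ under the extra hypothesis that $\eps\mapsto|\eps A+B_2^n|$ is $C^2$ near $0$ (Theorem~\ref{tgrand}). So no complete proof should be expected, and your overall architecture --- pass to $\eps=1/t$, use the perspective identity $V_{A,B}(t)^{1/n}=t\,g(1/t)$, compare $f(\eps)=|\eps A+B|$ with the relative Steiner polynomial $F(\eps)=|\eps\conv(A)+B|$, and reduce to concavity of $F^{1/n}$ at $0$ --- is exactly the paper's strategy for the partial result. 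Your honest identification of the remaining obstruction (a threshold $t_0(A_N)$ uniform over an approximating family of finite sets) also matches the paper's own remark after the dimension-$3$ special case.

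There are, however, two concrete gaps in the partial argument as you wrote it. (1) The last step ``$g''(0^+)=\widehat g\,''(0)\le 0$ and the conclusion follows by continuity of $g''$'' is not valid when $\widehat g\,''(0)=0$: continuity of $g''$ then says nothing about its sign for $\eps>0$. And $\widehat g\,''(0)=0$ does occur --- it is exactly the equality case $|B|\,V(\conv(A)[2],B[n-2])=V(\conv(A),B[n-1])^2$ of Minkowski's second inequality, i.e.\ the case where $\conv(A)$ is homothetic to $B$ (for instance $A=\partial B_2^n$ with $B=B_2^n$). The paper needs the \emph{strict} inequality, obtained from the equality case of Alexandrov--Fenchel, and must treat the homothetic case separately via Proposition~\ref{rouge} ($A+t\conv(A)=(1+t)\conv(A)$ for $t\ge n$). (2) Your asymptotic $F(\eps)-f(\eps)=o(\eps^n)$ is derived from the claim $V_{\conv(A)}(t)-V_A(t)\to 0$, but that convergence is a low-dimensional phenomenon (Kampf \cite{Kampf1}); for $n\ge 3$ the difference is in general only $O(t^{n-3})$ and need not vanish at infinity. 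The correct input is Kampf's Lemma~28, i.e.\ inequality (\ref{en0}), which gives $F(\eps)-f(\eps)=O(\eps^3)$ --- fortunately enough for the second-order matching at $0$, but your stated intermediate estimate is false for $n\ge 3$. Relatedly, the extension to a general convex body $B$ ``by comparing $B$ with inscribed and circumscribed balls'' is not justified: the paper only claims the analogue of (\ref{en0}) for $B=rB_2^n+M$ with $M$ having a $C^2$ support function, and sandwiching $B$ between balls does not control the difference $|\conv(A)+tB|-|A+tB|$.
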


We have shown that this conjecture is true in dimension $1$ and in dimension $2$ for $B=B_2^2$. Indeed, in dimension $2$, we have seen that it is true for every compact connected set. Since for every compact subset $A$ of $\R^2$ the set $A + t B_2^2$ is connected for $t\ge \frac{1}{2}\diam(A)$, it follows that $t \mapsto |A + tB_2^2|$ is $\frac{1}{2}$-concave on $[\frac{1}{2}\diam(A), + \infty)$.\\

We prove the Conjecture~\ref{conj2} in some particular cases in dimension $n\ge 3$.

\begin{prop}\label{rouge}

Let $A$ be a compact subset of $\R^n$. Then the function $t \mapsto |A + t \conv(A)|^{1/n}$ is affine on $[n ; + \infty)$. If moreover $\partial\conv(A)\subset A$ then $t \mapsto |A + t \conv(A)|^{1/n}$ is affine on $[1 ; + \infty)$.

\end{prop}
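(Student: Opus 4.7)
The plan is to show that the Minkowski sum stabilizes in the sense that $A+t\conv(A)=(1+t)\conv(A)$ for the appropriate range of $t$. Once this equality is established, homogeneity of the Lebesgue measure gives $|A+t\conv(A)|^{1/n}=(1+t)|\conv(A)|^{1/n}$, which is affine in $t$. The inclusion $A+t\conv(A)\subset(1+t)\conv(A)$ is immediate since $A\subset\conv(A)$ and $\conv(A)$ is convex; the substance is to prove the reverse inclusion. Concretely, given $x=(1+t)y$ with $y\in\conv(A)$, the goal is to exhibit $a\in A$ and $z\in\conv(A)$ with $(1+t)y=a+tz$, equivalently $z=y+\frac{1}{t}(y-a)$.

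For the general statement with $t\ge n$, the tool I would use is Carathéodory's theorem: write $y=\sum_{i=1}^{n+1}\lambda_i a_i$ with $a_i\in A$, $\lambda_i\ge 0$ and $\sum_i\lambda_i=1$, and pick an index $j$ with $\lambda_j$ maximal, so that $\lambda_j\ge 1/(n+1)$. Setting $a=a_j$, one computes
\[
z=\frac{(1+t)y-a_j}{t}=\sum_{i\ne j}\frac{(1+t)\lambda_i}{t}\,a_i+\frac{(1+t)\lambda_j-1}{t}\,a_j.
\]
The coefficients sum to $1$ and are non-negative provided $(1+t)\lambda_j\ge 1$, i.e.\ $\lambda_j\ge 1/(1+t)$. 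Since $\lambda_j\ge 1/(n+1)$, this holds exactly when $t\ge n$, and then $z\in\conv(A)$, completing the inclusion.

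For the refined statement under the hypothesis $\partial\conv(A)\subset A$ with $t\ge 1$, I would use a chord argument rather than Carathéodory. Given $y\in\conv(A)$, pick any line through $y$; it meets $\partial\conv(A)$ at two points $p_-,p_+$, and after swapping we may assume $\|y-p_-\|\le\|y-p_+\|$, so that $y=(1-s)p_-+s\,p_+$ with $s\in[0,1/2]$. Then I would set $a=p_-\in A$ (this is where the hypothesis is used) and
\[
z=p_-+\frac{(1+t)s}{t}(p_+-p_-).
\]
Since $t\ge 1$ forces $(1+t)s/t\le(1+t)/(2t)\le 1$, the point $z$ lies on the segment $[p_-,p_+]\subset\conv(A)$, and a direct verification gives $a+tz=(1+t)\bigl[(1-s)p_-+s\,p_+\bigr]=(1+t)y$.

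I do not expect a serious obstacle; the main subtlety is simply recognising that the constant $n$ in the first part is exactly the Carathéodory bound $n+1-1$, and that the constant $1$ in the second part corresponds to the elementary fact that through any interior (or boundary) point of a convex body one can find a chord on which the point lies no further than the midpoint. In both cases the conclusion that $t\mapsto|A+t\conv(A)|^{1/n}$ is affine on the stated interval follows at once from the set-theoretic identity $A+t\conv(A)=(1+t)\conv(A)$.
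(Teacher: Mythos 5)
Your proof is correct and follows essentially the same route as the paper: both reduce the statement to the set identity $A+t\,\conv(A)=(1+t)\conv(A)$, and your chord argument under the hypothesis $\partial\conv(A)\subset A$ is exactly the paper's (the paper treats $t=1$ and then adds $(t-1)\conv(A)$, while you handle general $t\ge 1$ directly). The only difference is that for $t\ge n$ the paper simply cites Schneider for the identity, whereas you supply the standard Carath\'eodory-based proof of it, and your computation there checks out.
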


\begin{proof}
It was noticed by Schneider \cite{Schneider75} that for every $t \geq n $,
$$ A + t\conv(A) = (1+t)\conv(A). $$
For  $t \geq n $, we get 
$$A + t\conv(A) = A+n\conv(A)+(t-n)\conv(A)=(1+ t)\conv(A). $$
We conclude that $t \mapsto |A + t \conv(A)|^\frac{1}{n}$ is affine on $[n; + \infty)$.

If moreover $\partial\conv(A)\subset A$ then for every $x \in \conv(A)$ there exists two points $y,z$ in $\partial \conv(A)$ such that $x\in[y,z]$. Say, for example, that $|x-y| \leq |x-z|$ then $u=2x-y \in [y,z]\subset \conv(A)$. Hence
$$ x = \frac{y+u}{2} \in \frac{\partial \conv(A)+\conv(A)}{2}. $$
Finally
$$ \conv(A) \subset \frac{\partial \conv(A)+\conv(A)}{2}\subset \frac{A+\conv(A)}{2}\subset\conv(A).$$
We deduce that $ A + t\conv(A) = (1+t) \conv(A), $ for every $t \geq 1$.
We conclude that $t \mapsto |A + t \conv(A)|^\frac{1}{n}$ is affine on $[1; + \infty)$.
\end{proof}

\noindent
{\bf Remark.}
More generally, Schneider introduced in \cite{Schneider75} the quantity
$$ c(A)=\inf\{t\ge 0 ; A+t\conv(A)=(1+t)\conv(A)\}. $$
Clearly  $t \mapsto |A + t \conv(A)|^\frac{1}{n}$ is affine on $[c(A); + \infty)$. The above proposition establishes that $c(A)\le n$ in general and $c(A)\le1$ if $\partial\conv(A)\subset A$. Notice that if $A\subset\R^n$ is connected then $c(A)\le n-1$, see  \cite{Schneider75}.

\begin{theo}\label{tgrand}
Let $A$ be a compact set in $\mathbb{R}^n$. If the function $\varepsilon \mapsto |\varepsilon A + B_2^n|$ is twice differentiable in a neighbourhood of $0$, with second derivative continuous at $0$, then there exists $t_0\ge0$ such that the function $V_A(t)= |A + t B_2^n|$ is $\frac{1}{n}$-concave for $t\ge t_0$. In particular this holds for $A$ being finite.
\end{theo}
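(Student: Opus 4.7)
The natural substitution is $\varepsilon = 1/t$: set $W(\varepsilon) = |\varepsilon A + B_2^n|$ and $g = W^{1/n}$. The homogeneity of Lebesgue measure gives $V_A^{1/n}(t) = t g(1/t)$, and a direct chain-rule computation yields the clean identity
\[
(V_A^{1/n})''(t) = \frac{g''(1/t)}{t^3}.
\]
So $V_A^{1/n}$ is concave on $[t_0, +\infty)$ if and only if $g'' \le 0$ on $(0, 1/t_0]$. The hypothesis that $W \in C^2$ near $0$ with $W''$ continuous at $0$, combined with $W(0) = |B_2^n| > 0$, ensures that $g = W^{1/n}$ is also $C^2$ near $0$ with $g''$ continuous at $0$. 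So it suffices to prove $g''(0) \le 0$ and then invoke continuity, modulo a degenerate equality case to be handled separately.

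\textbf{Key estimate via the convex hull.} I would compare $W$ with the Steiner polynomial $\Phi(\varepsilon) = |\varepsilon \conv(A) + B_2^n|$. By the Brunn--Minkowski inequality applied to the two convex sets $\conv(A)$ and $B_2^n$, $\Phi^{1/n}$ is concave on $\R_+$, hence $(\Phi^{1/n})''(0) \le 0$. The inclusion $A \subset \conv(A)$ yields $W \le \Phi$ with $W(0) = \Phi(0)$, and the crucial identity $W'(0) = \Phi'(0) = \int_{S^{n-1}} h_{\conv(A)}(u)\,d\H^{n-1}(u)$ follows from the fact that the support function of $A$ coincides with that of $\conv(A)$ (a Steiner-type first-variation formula valid for compact, possibly non-convex, sets). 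The non-negative function $\Phi - W$ then vanishes together with its first derivative at $0$, and the $C^2$ regularity forces $W''(0) \le \Phi''(0)$ by Taylor expansion. Substituting these three comparisons into
\[
g''(0) = \frac{W(0)^{1/n - 2}}{n}\Bigl[W(0)W''(0) - \tfrac{n-1}{n}(W'(0))^2\Bigr]
\]
gives $g''(0) \le (\Phi^{1/n})''(0) \le 0$; when the second inequality is strict, continuity of $g''$ at $0$ provides $g''(\varepsilon) < 0$ on some $(0, 1/t_0]$, whence concavity of $V_A^{1/n}$ on $[t_0, +\infty)$.

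\textbf{Degenerate case and finite $A$.} The equality $(\Phi^{1/n})''(0) = 0$ is precisely the equality case of Brunn--Minkowski, which forces $\conv(A) = rB_2^n + c$ for some $r>0$ and $c \in \R^n$; then $A$ must contain all extreme points $rS^{n-1} + c$ of its convex hull, and the argument in Proposition \ref{rouge} gives $\varepsilon A + B_2^n = (1 + r\varepsilon)B_2^n + \varepsilon c$ identically for $\varepsilon r \le 1$, so $V_A^{1/n}$ is affine and trivially concave. For $A = \{x_1, \dots, x_N\}$ finite, inclusion--exclusion writes
\[
W(\varepsilon) = \sum_{\emptyset \ne I \subset \{1, \dots, N\}} (-1)^{|I|+1}\Bigl|\bigcap_{i \in I}(B_2^n + \varepsilon x_i)\Bigr|,
\]
and the volume of each intersection of finitely many translates of $B_2^n$ is $C^\infty$ in $\varepsilon$ for $\varepsilon$ small (admitting a smooth parametrization in the pairwise distances $\varepsilon|x_i - x_j|$); so $W$ is $C^\infty$ near $0$ and the hypothesis is automatic.

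\textbf{Main obstacle.} The subtlest step is establishing the first-order Steiner-type expansion $W'(0) = \int_{S^{n-1}} h_{\conv(A)}\,d\H^{n-1}$ for general compact (possibly non-convex) $A$: this Minkowski first-variation formula, extended to the non-convex setting, requires a careful analysis of the boundary of $B_2^n + \varepsilon A$ for small $\varepsilon$, in the spirit of \cite{Ambrosio}.
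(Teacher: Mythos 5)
Your overall strategy --- pass to $\varepsilon=1/t$, compare $W(\varepsilon)=|\varepsilon A+B_2^n|$ with the Steiner polynomial $\Phi(\varepsilon)=|\varepsilon\conv(A)+B_2^n|$, deduce $(W^{1/n})''(0)<0$ unless $\conv(A)$ is a ball, and dispose of the ball case via Proposition~\ref{rouge} --- is exactly the paper's. But the step you yourself flag as ``the main obstacle'', namely $W'(0)=\Phi'(0)$, is a genuine gap, and it is precisely where the argument lives or dies: from $W\le\Phi$ and $W(0)=\Phi(0)$ alone you only get $W'(0)\le\Phi'(0)$, which points the wrong way in the bracket $W(0)W''(0)-\frac{n-1}{n}W'(0)^2$, and without the first-order equality your Taylor argument for $W''(0)\le\Phi''(0)$ collapses as well. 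The paper closes this gap with a quantitative lemma of Kampf (\cite{Kampf2}, Lemma 28): $0\le|\conv(A)+tB_2^n|-|A+tB_2^n|\le Ct^{n-3}$ for $t\ge1$, i.e.\ $0\le\Phi(\varepsilon)-W(\varepsilon)\le C\varepsilon^3$, which forces $W$ and $\Phi$ to agree to second order at $0$ (so in fact $W''(0)=\Phi''(0)$, not merely an inequality). That estimate is the real technical content of the theorem and cannot be waved at ``in the spirit of \cite{Ambrosio}''; as stated, your proof is incomplete at its central point.

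Two smaller issues. First, $(\Phi^{1/n})''(0)=0$ is not ``the equality case of Brunn--Minkowski'': a concave function may have vanishing second derivative at one point without being affine. What is actually needed is the equality case of Minkowski's second inequality $V(\conv(A),B_2^n[n-1])^2\ge V(\conv(A)[2],B_2^n[n-2])\,|B_2^n|$ (Schneider, Theorem~6.6.8), which does yield that $\conv(A)$ is homothetic to $B_2^n$; your subsequent treatment of that case via Milman's theorem and Proposition~\ref{rouge} is correct and matches the paper. Second, for finite $A$ the regularity of $\varepsilon\mapsto|\varepsilon A+B_2^n|$ near $0$ does not follow readily from inclusion--exclusion (the volume of an intersection of three or more balls is not an obviously smooth function of the configuration); the paper instead cites the analyticity result of Gorbovickis \cite{Gorbovickis}.
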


\begin{proof}
Kampf proved in \cite{Kampf2}, lemma 28, that for every compact set $A$ there exists a constant $C$ which depends on $n, A$ so that for every $t \geq 1$,
$$ 0 \leq |\conv(A) + t B_2^n| - |A + t B_2^n| \leq C t^{n-3}. $$
Then, setting $\varepsilon = \frac{1}{t}$, for every $\varepsilon \in (0,1]$, one deduces
\begin{eqnarray}\label{en0} 
0 \leq |\varepsilon \conv(A) + B_2^n| - |\varepsilon A + B_2^n| \leq C \varepsilon^3. 
\end{eqnarray}
We denote $g_{\conv(A)}(\varepsilon) = |\varepsilon \conv(A) + B_2^n|$ and $g_A(\varepsilon) = |\varepsilon A + B_2^n|$, since $g_A$ is twice differentiable at $0$  it follows that
$$ g_A(0) = g_{\conv(A)}(0) \, ; \, g_A'(0) = g_{\conv(A)}'(0) \, ; \, g_A''(0) = g_{\conv(A)}''(0). $$
From Steiner's formula, we get $g_{\conv(A)}(0)= |B_2^n|$ and
\begin{eqnarray*}  g_{\conv(A)}'(0) & = & nV(\conv(A),B_2^n[n-1]), \\ g_{\conv(A)}''(0) & = & n(n-1)V(\conv(A)[2], B_2^n[n-2]).
\end{eqnarray*}
 If $\conv(A)$ is not homothetic to $B_2^n$, then from the equality case of the Alexandrov-Fenchel inequality, see \cite{Schneider}, theorem 6.6.8, page 359, we get
$$ |B_2^n| V(\conv(A)[2], B_2^n[n-2])< V(\conv(A),B_2^n[n-1])^2, $$
that is
$$ \frac{n}{n-1} g_{\conv(A)}(0) g_{\conv(A)}''(0) < g_{\conv(A)}'(0)^2. $$
Thus we deduce that0
$$ \frac{n}{n-1} g_A(0) g_A''(0) < g_A'(0)^2. $$
Since $g_A$, $g_A'$ and $g_A''$ are continuous at $0$, there exists $\varepsilon_0 > 0$ such that for every $\varepsilon \in [0, \varepsilon_0]$,
$$ \frac{n}{n-1} g_A(\varepsilon) g_A''(\varepsilon) \leq g_A'(\varepsilon)^2. $$
Hence the function $g_A$ is $\frac{1}{n}$-concave on $[0, \varepsilon_0]$. We conclude by Proposition~\ref{equiv}, setting $t_0 = \frac{1}{\varepsilon_0}$, that $t \mapsto |A + tB_2^n|$ is $\frac{1}{n}$-concave on $[t_0, +\infty)$.
If $\conv(A)$ is homothetic to $B_2^n$ then the result  follows from Proposition~\ref{rouge}. 

If $A$ is finite then the function $\varepsilon \mapsto |\varepsilon A + B_2^n|$ is analytic in a neighbourhood of $0$, see \cite{Gorbovickis}.

\end{proof}

\noindent
{\bf Remarks.}
\begin{enumerate}
\item The preceding theorem is still valid if one replaces $B_2^n$ by a convex body $B=rB_2^n+M$, for some $r>0$ and some convex body $M$ such that  its support function $h_B(u)=\max\{ <x,u>, x \in B \}$ is twice differentiable on $\R^n\setminus\{0\}$ because inequality (\ref{en0}) of \cite{Kampf2} holds with these assumptions.
\item The function $\varepsilon \mapsto |\varepsilon A + B_2^n|$ is not necessarily twice differentiable in a neighbourhood of $0$ as can be seen from the following example. In dimension 2, we consider the points $I=(1,1)$, $J=(1,0)$ and $A=I\cup J\cup\{(\cos(1/k), \sin(1/k)), k \geq 1\}$. Then, $A$ is compact but for every $t_0 \in \R_+$, the function $V_A(t)= |A + t B_2^2|$ is not twice differentiable on $(t_0, + \infty)$.
 \end{enumerate}

In fact, one can show that the function $V_A(t)= |A + t B_2^n|$ is continuously differentiable on $[\diam(A); + \infty)$.

\begin{prop}\label{C1}

Let $A$ be a compact subset of $\R^n$. Then the function $V_A(t)= |A + t B_2^n|$ is continuously differentiable on $[\diam(A); + \infty)$, the function $g_A(\eps)= |\eps A+B_2^n|$ is continuously differentiable on $(0, \frac{1}{\diam(A)}]$ and differentiable at $0$ with $g_A'(0)=nV(\conv(A),B_2^n[n-1])$.

\end{prop}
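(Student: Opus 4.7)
The statement has three parts: the value of $g_A'(0)$, the $C^1$-regularity of $V_A$ on $[\diam(A), +\infty)$, and the $C^1$-regularity of $g_A$ on $(0, 1/\diam(A)]$. The identity $V_A(t) = t^n g_A(1/t)$ for $t > 0$, together with the fact that $t \mapsto 1/t$ is a smooth diffeomorphism between the two half-lines, shows that the last two assertions are equivalent. So only two genuinely distinct tasks remain: the boundary value at $0$ and the $C^1$-regularity on one of the half-lines.

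For $g_A'(0) = nV(\conv(A), B_2^n[n-1])$, I would invoke Kampf's estimate (\ref{en0}) already cited in the proof of Theorem~\ref{tgrand}:
\begin{eqnarray*}
0 \leq g_{\conv(A)}(\eps) - g_A(\eps) \leq C\eps^3, \qquad \eps \in (0,1].
\end{eqnarray*}
Since $\conv(A)$ is a convex body, Steiner's formula yields $g_{\conv(A)}(\eps) = |B_2^n| + nV(\conv(A), B_2^n[n-1])\eps + O(\eps^2)$. Subtracting the cubic error gives $g_A(\eps) - |B_2^n| = nV(\conv(A), B_2^n[n-1])\eps + O(\eps^2)$, so $g_A$ is right-differentiable at $0$ with the claimed value.

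For the $C^1$-regularity of $V_A$ on $[\diam(A), +\infty)$, the key observation is that $A + t B_2^n$ is star-shaped with respect to every $a_0 \in A$ once $t \geq \diam(A)$: for any $a \in A$ we have $|a - a_0| \leq \diam(A) \leq t$, so $a_0 \in \overline{B}(a, t)$; since $A + tB_2^n = \bigcup_{a \in A} \overline{B}(a, t)$ is a union of convex sets each containing $a_0$, every segment from $a_0$ to a point of the union stays in the union. Fixing such an $a_0$, the inclusions $\overline{B}(a_0, t) \subset A + t B_2^n \subset \overline{B}(a_0, t + \diam(A))$ (the right one following from $A \subset \overline{B}(a_0, \diam(A))$) provide two-sided control of the radial function $\rho(\cdot, t)$ of $A + tB_2^n$ from $a_0$ and imply that $\rho(\cdot, t)$ is Lipschitz on the unit sphere with constant bounded by $(t + \diam(A))^2/t$. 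Consequently $\partial(A + tB_2^n)$ is a Lipschitz $(n-1)$-manifold, both one-sided derivatives $(V_A)'_{\pm}(t)$ (which exist by Stach\'o's result (\ref{derivative})) coincide with $\H^{n-1}(\partial(A + tB_2^n))$, and this common value depends continuously on $t$ since $\rho$ does.

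The main technical obstacle is the rigorous identification of the one-sided Minkowski contents $(V_A)'_{\pm}(t)$ with $\H^{n-1}(\partial(A + tB_2^n))$ and the continuity in $t$ of this Hausdorff measure; this rests on the area formula for the Lipschitz graph provided by $\rho(\cdot, t)$, combined with a dominated-convergence argument using the uniform Lipschitz bounds on $\rho$ afforded by the inner ball $\overline{B}(a_0, t) \subset A + tB_2^n$.
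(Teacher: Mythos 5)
Your computation of $g_A'(0)$ (Kampf's estimate (\ref{en0}) plus Steiner's formula for $g_{\conv(A)}$) and your reduction between the statements for $V_A$ and $g_A$ via $g_A(\eps)=\eps^n V_A(1/\eps)$ are exactly what the paper does. The part that diverges is the $C^1$-regularity of $V_A$ on $[\diam(A),+\infty)$, and there your argument has a genuine gap. The paper does not prove this from scratch: it quotes Rataj--Schmidt--Spodarev (theorem 3.3) for the existence of $V_A'(t)=|\partial(A+tB_2^n)|$ at every $t\ge\diam(A)$, and then Stach\'o's theorem 3 (stability of $|\partial(A_N+tB_2^n)|$ under Hausdorff convergence $A_N\to A$ at points where $V_A'(t)$ exists), applied to $A_N=A+\frac1N B_2^n$ and to $A_N=A+(t_0-\frac1N)B_2^n$, to get right and left continuity of $V_A'$. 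You propose to replace both citations by a direct radial-function argument, and the replacement does not close.

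Two specific problems. First, a minor one: the Lipschitz bound on $\rho(\cdot,t)$ does not follow from the two inclusions $\overline{B}(a_0,t)\subset A+tB_2^n\subset\overline{B}(a_0,t+\diam(A))$. A star-shaped set containing a ball around its star centre can have a discontinuous radial function (a ball with a spike attached); what one needs is that the \emph{kernel} contains a ball, or one must exploit the fact that $A+tB_2^n$ is a union of balls of radius $t$ each containing $a_0$, so that $\rho(\cdot,t)$ is a supremum of uniformly Lipschitz functions. This is repairable. Second, and decisively: the step you yourself flag as "the main technical obstacle" is precisely the content of the two theorems the paper imports, and your sketch does not supply it. Even granting that $\partial(A+tB_2^n)$ is a Lipschitz radial graph, (i) the identification of the one-sided Minkowski contents $(V_A)'_\pm(t)$ with $\H^{n-1}(\partial(A+tB_2^n))$ is a nontrivial theorem (cf.\ the Ambrosio--Colesanti--Villa reference in Section 2), and (ii) the continuity in $t$ of $\H^{n-1}(\partial(A+tB_2^n))$ does not follow from continuity of $t\mapsto\rho(\cdot,t)$ together with uniform Lipschitz bounds: the area of the radial graph is an integral involving $\nabla_u\rho$, and under uniform convergence of uniformly Lipschitz functions the area of the graph is only lower semicontinuous (a sawtooth converging uniformly to a line is the standard counterexample). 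Dominated convergence gives you nothing here because you have no pointwise convergence of the gradients. So the heart of the proposition is left unproved; you would either have to cite the same external results the paper does, or develop a genuinely new stability argument for the surface areas.
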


\begin{proof}

Rataj et al. in \cite{Rataj}, theorem 3.3, showed that $V_A'(t)$ exists for every $t \geq \diam(A)$, thus we have for every $t \geq \diam(A)$
$$ V_A'(t) = |\partial (A + t B_2^n)|. $$
Moreover, if $(A_N)$ is a sequence of non-empty compact subset of $\R^n$ tending in Hausdorff distance to a compact subset $A$ of $\R^n$, then by \cite{Stacho}, theorem~3, for every $t>0$ such that $V_A'(t)$ exists
$$ \lim_{N \to + \infty} |\partial (A_N + t B_2^n)| = |\partial (A + t B_2^n)|. $$
Let $t\ge \diam(A)$, we apply this result to $A_N = A + \frac{1}{N} B_2^n$. We obtain that
$$ \lim_{N \to + \infty} V_A'\left(t + \frac{1}{N}\right) = V_A'(t). $$
Hence, $V_A'$ is right continuous at $t$. Let $t>t_0>\diam(A)$, we now apply the result of Stach\'o to $ A_N = A + (t_0-\frac{1}{N}) B_2^n $. We obtain
$$ \lim_{N \to + \infty} |\partial (A_N + (t-t_0) B_2^n)| = |\partial (A + t_0 B_2^n + (t-t_0)B_2^n)| $$
that is
$$ \lim_{N \to + \infty} V_A'\left(t-\frac{1}{N}\right) = V_A'(t). $$
Hence, $V_A'$ is left continuous at $t$. We conclude that $V_A$ is continuously differentiable on $[\diam(A); + \infty)$. 

Let us denote $g_A(\eps)=|\eps A+B_2^n|$.
Since 
$$
g_A(\eps)=|\eps A+B_2^n|=\eps^nV_A\left(\frac{1}{\eps}\right)
$$
one gets that $g_A$ is continuously differentiable on $(0, \frac{1}{\diam(A)}]$. Moreover, from the inequality (\ref{en0}), valid for any compact set $A$, one deduces that $g_A$ is also  differentiable at $0$, with $g_A'(0)=nV(\conv(A),B_2^n[n-1])$.
\end{proof}

\subsubsection{A special case in dimension 3}

We have seen that for every finite subset $A$ of $\R^n$, there exists $t_0(A)$ such that the function $V_A(t)= |A + t B_2^n|$ is $\frac{1}{n}$-concave for $t\ge t_0(A)$. In dimension 3, we can give a bound on $t_0(A)$ in terms of the geometry of $A$.

In the sequel, $A$ denotes a finite subset of $\R^3$. We denote by $D_i$ a Dirichlet-Voronoi cell with respect to $A=\{x_1,\dots, x_N\} $, defined for $i \in\{1,\cdots, N\}$ by 
$$
D_i=\{x\in \R^3 ; |x-x_i| \le |x-x_j|, \forall j\in\{1,\dots, N\}\}.
$$
The following condition can be found in \cite{KK}.

\paragraph*{Condition ($\star$)}
For all faces $F$ of the polytope $\conv(A)$, and all edges $E$ of $F$, we have
$$ \forall x \in E, ~ d(x,A \cap E) = d(x,A \cap F).$$

For example, if $\conv(A)$ is simplicial, this condition holds if and only if each face of $\conv(A)$ is a triangle with only acute angles. In general, this condition holds if and only if for every face $F$ of $\conv(A)$, for every edge $[a,b]$ of $F$ and for every vertex $c$ of $F$, the angle $(ca,cb)$ is acute.

\begin{prop}

Let $A$ be a finite set in $\R^3$ satisfying the condition $\mathrm(\star)$. Then, $V_A(t)= |A + t B_2^3|$ is $\frac{1}{3}$-concave on $[t_0(A) ; + \infty)$, where
$$ t_0(A) = \min \{ t \geq \diam(A) \, ; \, D_i \subset A + tB_2^3, \, \mathrm{for \, all \, bounded} \, D_i \}. $$

\end{prop}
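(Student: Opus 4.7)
The plan is to extend the two-dimensional argument of Theorem~\ref{theo:2.9} to dimension three, using the Dirichlet--Voronoi decomposition as the combinatorial bookkeeping device. For every $t\ge t_0(A)$ the very definition of $t_0(A)$ ensures that every bounded cell $D_i$ is entirely absorbed into $A+tB_2^3$, so that its contribution to $V_A(t)$ is the constant $|D_i|$. Only the unbounded cells, which are in bijection with the vertices of $\conv(A)$, carry $t$-dependence. Consequently, for $t\ge t_0(A)$ the boundary $\partial(A+tB_2^3)$ is a disjoint union of spherical caps $\partial(x_i+tB_2^3)\cap D_i$, one per vertex of $\conv(A)$, and by (\ref{bord}) $V_A'(t)$ equals the sum of their areas.

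The role of condition $(\star)$ is to ensure that these spherical caps assemble into a well-controlled closed surface. The condition precisely says that on each edge $E$ of each face $F$ of $\conv(A)$, the Voronoi diagram of $A\cap E$ is compatible with the Voronoi diagram of $A\cap F$. Propagating this compatibility into $\R^3$, the perpendicular bisector planes separating adjacent vertices of $\conv(A)$ cut the spheres $\partial(x_i+tB_2^3)$ along matching circular arcs, so that for $t\ge t_0(A)$ the surface $\partial(A+tB_2^3)$ is a piecewise smooth topological $2$-sphere whose combinatorial structure mirrors that of $\partial\conv(A)$.

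With this topological structure secured, the target inequality $3V_A V_A''\le 2(V_A')^2$ — equivalent to the $\tfrac{1}{3}$-concavity of $V_A$ — should follow by combining the three-dimensional isoperimetric inequality $(V_A')^3\ge 36\pi V_A^2$ applied to $K=A+tB_2^3$ with a three-dimensional analogue of Fiala's bound on $V_A''$. The idea is to write $V_A''$ as the surface derivative of $V_A'=|\partial(A+tB_2^3)|$ and decompose it into integrated mean curvature on the spherical caps together with dihedral contributions along the matching arcs; the Gauss--Bonnet theorem applied to the topological sphere of the previous paragraph then provides a total curvature of $8\pi$, and hence the upper bound on $V_A''$ needed to close the argument exactly as in the planar case of Theorem~\ref{theo:2.9}. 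The main obstacle is precisely this last step: identifying the correct three-dimensional analogue of Fiala's bound and extracting from Gauss--Bonnet a usable inequality on $V_A''$. Condition $(\star)$ is essential here, because without it Voronoi walls can cross $\partial\conv(A)$ in uncontrolled ways, the caps need not assemble into a closed topological sphere, and the dihedral curvature integrals along the matching arcs escape control.
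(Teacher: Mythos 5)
Your plan stalls exactly where you admit it does, and the obstacle is not a technicality: the step you defer --- a three-dimensional analogue of Fiala's bound on $V_A''$ extracted from Gauss--Bonnet --- cannot exist in the form you need. In the planar proof of Theorem~\ref{theo:2.9} the miracle is that $V_A''\le 2\pi\chi$ is bounded by a \emph{constant}; in $\R^3$ this fails already for a single ball, where $V_A''(t)=2M+8\pi t$ grows linearly in $t$ ($M$ denoting the integrated mean curvature), so no topological constant can majorize $V_A''$. Moreover Gauss--Bonnet controls the total \emph{Gaussian} curvature of $\partial(A+tB_2^3)$ (which is $4\pi$ for a topological sphere, not $8\pi$), whereas $V_A''$ is governed by the total \emph{mean} curvature, and there is no general inequality bounding the latter by the former. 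Even granting your geometric description of the boundary as an assembly of spherical caps, closing the argument via the isoperimetric inequality would require Minkowski-type inequalities such as $S^2\ge 3VM$ for the non-convex sets $A+tB_2^3$, which is precisely what one cannot assume. (Minor additional point: the unbounded Voronoi cells are not in bijection with the vertices of $\conv(A)$ in general; any $x_i\in\partial\conv(A)$ has an unbounded cell.)

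The paper takes an entirely different route, and condition $(\star)$ enters through it: by a result of Kampf and Kiderlen, under $(\star)$ one has, for every $t>t_0(A)$,
$$ |\conv(A)+tB_2^3|-|A+tB_2^3|=a_0+\sum_{p\ge1}a_p\,t^{-2p+1},\qquad a_p\ge0 .$$
Since $V_{\conv(A)}$ is a polynomial, differentiating this series term by term yields the three comparisons $V_A\le V_{\conv(A)}$, $V_A'\ge V_{\conv(A)}'$ and $V_A''\le V_{\conv(A)}''$ on $(t_0(A),+\infty)$, and the $\frac{1}{3}$-concavity of $V_{\conv(A)}$ (Brunn--Minkowski applied to the convex hull) transfers to $V_A$ through the chain $\frac32V_AV_A''\le\frac32V_{\conv(A)}V_{\conv(A)}''\le V_{\conv(A)}'(t)^2\le V_A'(t)^2$. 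To salvage your approach you would essentially have to prove the upper bound $V_A''\le V_{\conv(A)}''$ by direct curvature accounting on your cap decomposition, which amounts to reproving the Kampf--Kiderlen expansion; as it stands, the key inequality of your argument is missing and the proposed tool for obtaining it does not apply.
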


\begin{proof}

Kampf and Kiderlen have shown in \cite{KK} that for every $t > t_0(A)$,
$$ |\conv(A) + t B_2^3| - |A + t B_2^3| = a_0 + \sum_{p \geq 1} a_p t^{-2p+1} $$
with for all $p \geq 0$, $a_p \geq 0$. Since $V_{\conv(A)}$ is polynomial thus $V_{A}$ is twice differentiable on $(t_0(A) ; + \infty)$. It follows that for every $t > t_0(A)$,
\begin{eqnarray*}
V_{A}'(t) & = & V_{\conv(A)}'(t) + \sum_{p \geq 1}(2p - 1)a_pt^{-2p} \\ V_{A}''(t) & = & V_{\conv(A)}''(t) - \sum_{p \geq 1}2p(2p-1)a_pt^{-2p-1}.
\end{eqnarray*}
Then, for every $t > t_0(A)$,
\begin{eqnarray}\label{strategy}
V_{A}(t) \leq V_{\conv(A)}(t),\ V_{A}'(t)  \geq V_{\conv(A)}'(t) \ \mbox{and}\ V_{A}''(t)  \leq  V_{\conv(A)}''(t). 
\end{eqnarray}
The Brunn-Minkowski inequality implies that $V_{\conv(A)}$ is $\frac{1}{3}$-concave on $\R_+$. We conclude that for every $t > t_0(A)$,
$$ \frac{3}{2} V_{A}(t) V_{A}''(t) \leq \frac{3}{2} V_{\conv(A)}(t) V_{\conv(A)}''(t) \leq V_{\conv(A)}'(t)^2 \leq V_{A}'(t)^2. $$
So, $V_{A}$ is $\frac{1}{3}$-concave on $[t_0(A) ; + \infty)$.
\end{proof}

\noindent
{\bf Remarks.}
\begin{enumerate}
\item For an arbitrary compact subset $A$ of $\R^3$, if there exists a sequence $(x_N)_{N \in \N^*}$ dense in $A$ such that for every $N$, the set $A_N$ satisfies the condition $\mathrm(\star)$, where $A_N = \{ x_1, \dots, x_N \}$, and such that $t_0(A_N)$ is uniformly bounded in $N$ by a $t_0$, then the function $t \mapsto |A + t B_2^3|$ will be $\frac{1}{3}$-concave on $[t_0 ; + \infty)$.
\item In dimension $n\ge 4$, there is no hope to prove the inequalities (\ref{strategy}) because  for $A$ being  two points at distance $2$, one has for every $t\ge1$
\begin{eqnarray*}
V_A'(t)&=&n|B_2^n|t^{n-1}+2(n-1)|B_2^{n-1}|t\int_0^1(t^2-x^2)^\frac{n-3}{2}\de x\\
&<&n|B_2^n|t^{n-1}+2(n-1)|B_2^{n-1}|t^{n-2}=V_{\conv(A)}'(t).
\end{eqnarray*}
\end{enumerate}

\section{Further analogies}

In Information theory,  the Blachman-Stam inequality (\cite{B} and \cite{St}), which states that for any independent random vectors $X$ and $Y$ in $\R^n$ with non-zero Fisher information one has
$$
I(X+Y)^{-1}\ge I(X)^{-1}+I(Y)^{-1},
$$
 directly implies all previous mentioned inequalities of Information theory: the entropy power inequality   (thus the Log-Sobolev inequality for Gaussian measure) and the concavity of entropy power. This last inequality also called the "isoperimetric information inequality" may be deduced from the Blachman-Stam inequality in the same way as the "isoperimetric entropy inequality" was deduced from the entropy power inequality, by applying it to $Y=\sqrt{\eps}G$ and letting $\eps$ tend to $0$.

Let us now investigate the analogue of the Fisher information and the Blachman-Stam inequality in the Brunn-Minkowski theory. Recall de Bruijn's identity
$$
I(X)= \frac{\de}{\de t}_{|t=0}2H(X+\sqrt{t}G).
$$
 Since the entropy $H$ is the analogue of the logarithm of the volume $\log|\cdot|$, Dembo, Cover and Thomas \cite{Cover} proposed, as an analogue of the Fisher information $I$, the quantity
$$
\frac{\de}{\de\eps}_{|\eps=0}( \log|A+\eps B_2^n|)=\frac{|\partial A|}{|A|},
$$ 
for sufficiently regular compact sets $A$. Thus, in analogy with the Blachman-Stam inequality, one may wonder if for every regular compact sets $A$ and $B$ 
\begin{eqnarray}\label{DCTconj}
\frac{|A+B|}{|\partial(A+B)|}\ge \frac{|A|}{|\partial A|}+\frac{|B|}{|\partial B|}.
\end{eqnarray}
Even restricted to the case where $A$ and $B$ are convex sets, checking the validity of this inequality is not an easy task and it was conjectured by Dembo, Cover and Thomas \cite{Cover} that the inequality (\ref{DCTconj}) holds true in this particular case.
In \cite{FGM}, it was shown that this conjecture (for convex sets) holds true in dimension $2$ but is false in dimension  $n\ge 3$. In particular, it was proved that, if $n\ge 3$, there exists a convex body $K$ such that the inequality (\ref{DCTconj}) cannot be true for all $A, B\in \{K+tB_2^n ; t\ge 0\}$. It was also proved that if $B$ is a segment then there exists a convex body $A$ for which (\ref{DCTconj}) is false. 

In another direction, one may also ask if (\ref{DCTconj}) holds true for $B$ being any Euclidean ball and every compact set $A$. In this case, applying (\ref{DCTconj}) to $A$ replaced by $A+sB_2^n$ and $B=(t-s) B_2^n$, one would have, for every $0\le s\le t$, 
$$
\frac{|A+tB_2^n|}{|\partial(A+t B_2^n)|}\ge \frac{|A+sB_2^n|}{|\partial (A+sB_2^n)|}+(t-s)\frac{|B_2^n|}{|\partial B_2^n|}= \frac{|A+sB_2^n|}{|\partial(A+sB_2^n)|}+\frac{t-s}{n},
$$
with the notations given above, this would mean that 
$$
t\mapsto\frac{V_A(t)}{(V_A)'_+(t)}-\frac{t}{n}
$$
is non-decreasing on $(0,+\infty)$. This is equivalent to the $\frac{1}{n}$-concavity of $V_A$, which is the Costa-Cover conjecture. \\

\noindent
{\bf Extensions.}\\
In a work in progress \cite{M}, the second named author investigates extensions of Costa-Cover conjecture. More precisely,
 he discusses the concavity properties of the function $t\mapsto \mu(A+tB_2^n)$, where $\mu$ is a log-concave measure. 
 He also establishes functional versions of Costa-Cover conjecture.\\
 
\noindent
{\bf Acknowledgement.}\\
We thank Evgueni Abakoumov, Ludovic Goudenège and Olivier Gu\'edon for their kind remarks.

\end{document}